\renewcommand{\th}{\theta}
\newcommand{\eps}{\varepsilon}
\renewcommand{\phi}{\varphi}
\newcommand{\scr}[1]{{\mathcal #1}}
\newcommand{\EE}{\mathbb{E}}
\newcommand{\PP}{\mathbb{P}}
\newcommand{\ind}{\mathbf{1}}
\newcommand{\Bm}{\begin{bmatrix}}
	\newcommand{\Em}{\end{bmatrix}}
\newcommand{\Th}{\Theta}
\newcommand{\simind}{\stackrel{\rm ind}{\sim}}
\newcommand{\simiid}{\stackrel{\rm iid}{\sim}}
\newcommand{\cit}{\cite}
\numberwithin{equation}{section}
\theoremstyle{plain}
\newtheorem{thm}{Theorem}[section]
\newtheorem{lem}[thm]{Lemma}
\newtheorem{rem}[thm]{Remark}
\newtheorem{ex}[thm]{Example}
\begin{document}
	
\begin{frontmatter}
\title{Nonparametric Bayesian estimation of a concave distribution function with mixed interval censored data}

\runtitle{Nonparametric Bayesian estimation of a concave distribution  under censoring}

\begin{aug}
	\author{\fnms{Geurt} \snm{Jongbloed}\thanksref{a}\ead[label=e1]{g.jongbloed@tudelft.nl}},
	\author{\fnms{Frank} \snm{van der Meulen}\thanksref{a}\ead[label=e2]{f.h.vandermeulen@tudelft.nl}}
	\and
	\author{\fnms{Lixue} \snm{Pang}\thanksref{a}%
		\ead[label=e3]{l.pang@tudelft.nl}}
	
	\runauthor{G. Jongbloed et al.}
	
	\affiliation[a]{Institute of Applied Mathematics,
		Delft University of Technology}					
	
	\address{Address of the First, Second and Third authors\\
		Van Mourik Broekmanweg 6, 2628 XE Delft, The Netherlands.\\
		\printead{e1,e2,e3}}

\end{aug}

\begin{abstract}
Assume we observe a finite number of inspection times together with information on whether a specific event has occurred before each of these times. Suppose replicated measurements are available on multiple event times.  The set of inspection times, including the number of inspections, may be different for each event. This is known as  mixed case interval censored data. We consider Bayesian estimation of the distribution function of the event time while assuming it is concave. We provide sufficient conditions on the prior such that the resulting procedure is consistent from the Bayesian point of view.
We also provide computational methods for drawing from the posterior and illustrate the performance of the Bayesian method in both a simulation study and two real datasets.

\end{abstract}

\begin{keyword}
\kwd{Bayesian nonparametrics}
\kwd{Dirichlet process}
\kwd{Markov Chain Monte Carlo}
\kwd{posterior consistency}
\kwd{shape constrained inference}
\end{keyword}

\end{frontmatter}

\section{Introduction}
\label{sec:intro}
In survival analysis, one is interested in the time a certain event occurs. For example, the event may be the onset of a disease. A well known complication often encountered in practice is censoring, where the precise time at which
an event occurs is unknown, but partial information on it is available. In right censoring for example, one only observes the event if it occurs before a certain censoring time, otherwise one observes the censoring time accompanied by the information that the event occurred after this time. In interval censoring, one never sees the exact event time. Only an interval of positive length (possibly infinite) is observed which contains the event time of interest.

Suppose $X$ models the actual event time for one subject. Instead of observing $X$ directly, we observe a finite number of {\it inspection times} $0<t_1<t_2<\dots<t_{k}<\infty$, together with the information which of the intervals $(t_{j-1},t_j]$ contains $X$.  We will assume a setting in which we obtain data that are modelled as independent and identically distributed realisations of $X_1,\ldots, X_n$, each of which is distributed as $X$. For each subject, the set of inspection times, as well as the number of inspections, may be different.
This type of data is known as {\it  mixed-case interval censored data}. Our model includes both the interval censoring case 1  model (also known as current status model) and interval censoring case 2 model  for which $k=1$ and $k=2$ respectively.
In many statistical models, there are reasons to impose specific assumptions on functional parameters, for example shape constraints. 
Incorporating such constraints into the estimation procedure  often improves the accuracy of the resulting estimator.
In this paper, we consider the problem of estimating the distribution function $F$ of $X$, assuming that $F$ is concave. 

\subsection{Related literature}
In \cit{GroenWeln}, the pointwise asymptotic distribution  of the maximum likelihood estimator (mle) of the distribution function in the interval censoring case 1 model is derived. For interval censoring case 2, the asymptotic pointwise distribution of
the mle is still not known.
In the mixed case interval censoring model, the mle  has been studied by \cit{SchYu} where it is shown to be  $L_1$-consistent.
In \cit{WelnZha} a panel count model is considered, which includes the mixed case interval censoring model as a special case, namely when the counting process has only one jump. For this panel count model, \cit{WelnZha} study
two estimators. In case the counting process has only one jump and there is one inspection time, their estimators coincide with the mle for current status data ($k=1$). If $k>1$, this is not the case.
\cit{DumJong04} consider the current status model with the additional constraint that the underlying distribution function $F_0$ is concave. It is shown that the supremum distance between the nonparametric least squares
estimator and the underlying distribution function $F_0$ is of order $(\log n/n)^{2/5}$. For mixed case interval censoring, the MLE is shown to be asymptotically consistent under the assumption that $F_0$ is concave or
convex-concave in \cit{DumJong06}. In addition, an algorithm  for computing the mle is proposed there.

\medskip
From the Bayesian perspective, \cit{SuVan} derived a nonparametric Bayesian estimator for the event time distribution function based on right-censored data, using the Dirichlet process prior. A special feature
in this right-censoring model is that the posterior mean estimator can be constructed explicitly. For interval censored data, this explicit construction is not available.
\cit{CalGom} propose  a  nonparametric Bayesian approach in the interval censoring model and use a Markov Chain Monte Carlo algorithm to obtain estimators for the posterior mean.
\cit{DosFre} consider the  Dirichlet Process  prior in the interval censoring model.  They develop and compare various  Monte Carlo based algorithms for computing Bayesian estimators.
 A host of closely related Bayesian nonparametric models have been implemented in the DP-package in the R-language, Cf.\ \cit{Jara2011}. 

\subsection{Contribution}

In this paper, we define and study a Bayesian estimator of the event time distribution based on {\it  mixed-case interval censored data} under the additional assumption that the distribution function is concave.
An advantage of the Bayesian setup is the ease of constructing credible regions. To construct frequentist analogues of these, confidence regions, can be quite cumbersome, relying on either bootstrap simulations or asymptotic arguments.
We address this problem from a theoretical perspective and provide conditions on the prior such that the resulting procedure is consistent. That is, assuming data are generated from a ``true'' distribution, we show that the posterior asymptotically (as the sample size increases) converges to this distribution.
The proof relies on Schwartz' method for proving posterior consistency (Cf.\ Section 6.4 in \cit{GhoVaart}).
In addition, we provide computational methods for drawing from the posterior and illustrate its performance in a simulation study. Finally, we apply the Bayesian procedure on two real data sets and construct pointwise credible sets.

\subsection{Outline}  Section \ref{sec:modelnotation} sets off with introducing notation and formally describing the model. In section \ref{sec:consistency} we derive posterior consistency
under a weak assumption on the prior distribution on the class of concave distribution functions.
A Markov Chain Monte Carlo algorithm for obtaining draws from the posterior using the Dirichlet Mixture Process prior is detailed  in section \ref{sec:alg}. 
In section \ref{sec:siml} we perform a simulation study to illustrate the behaviour of the proposed Bayesian method. Furthermore, we apply it to two data sets in section \ref{sec:realdata}, one concerned with Rubella and the other with breast cancer. The appendix contains proofs of some technical results.

\section{Model, likelihood and prior}
\label{sec:modelnotation}
\subsection{Model and likelihood}
Suppose $X$ is a random variable in $[0,\infty)$ with  concave distribution function $F_0$. Instead of observing $X$, we observe the random vector $(K,T,\Delta)$ that is constructed as follows. First, $K$ is sampled
from a discrete distribution with probability mass function $p_K$ on $\{1,2\ldots\}$, representing the number of inspection times. Given $K=k$, $T\in\mathbb{R}^k$ is sampled from a density $g_k$ supported on the set
$\{t=(t_1,\ldots, t_k) \in (0,L]^k\,:\,0<t_1<\cdots<t_k<\infty\}$ for some constant $L$. This random vector contains the (ordered) inspection times. Finally, $\Delta\in\{0,1\}^{k+1}$ is the vector indicating in which of the $k+1$
intervals generated by $T$ the event actually happened. Thus,  it is defined as the vector with $j$-th component
$$
\Delta_j=1_{(T_{j-1},T_{j}]}(X) \mbox{ for } 1\le j\le k+1
$$
where $T_0=0$ and $T_{k+1}=\infty$ by convention.

This procedure is repeated independently, so for sample size $n$ the data is a realisation of
$$
\mathcal{D}_n:=\{(K_i,T^{i},\Delta^{i})=(K_i,T_{i,1},\ldots,T_{i,K_i},\Delta_{i,1},\ldots,\Delta_{i,K_i+1}),\,  i=1,\dots,n\}.
 $$
 Define the sets
\begin{equation}
\label{eq:defCk}
\mathcal{C}_k=\{t\in (0,L]^k:\,0<t_1<\cdots<t_k<\infty\}
\end{equation}
 and $\mathcal{H}_k=\{\delta\in\{0,1\}^{k+1}:\,\sum_{j=1}^{k+1}\delta_j=1\}$, $k=1,2,\dots$. Then $\mathcal{D}_n\in\left(\bigcup_{k=1}^\infty\{k\}\times\mathcal{C}_k\times\mathcal{H}_k\right)^n$.

Upon conditioning on the observed inspection times, we can define the likelihood of the distribution function $F$ by
\begin{equation}\label{eq:likelih}L(F)=\prod_{i=1}^n \Big(p_{K}(K_i)g_{K_i}(T^{i})\prod_{j=1}^{K_i+1}(F(T_{i,j})-F(T_{i,j-1}))^{\Delta_{i,j}}\Big).\end{equation}
We denote the joint distribution of $\{(K_i, T^i),\, 1\le i \le n\}$  by $\PP_{K,T}$. Given these $(K_i,T^i)s$ the vectors $\Delta^i$ have multinomial distributions with probabilities depending on $F_0$. The  distribution of $\mathcal{D}_n$  will be denoted by $\PP_0$. Expectation with respect to measures will be denoted by $\EE$, supplemented by a subscript referring to the measure.

\subsection{Prior specification}
In order to estimate the underlying concave distribution function in a Bayesian way, we  construct a  prior distribution on the set of all concave distribution functions. For $\theta>0$, denote the
uniform density function on $[0,\theta]$ by $\phi(\cdot\mid\theta)$ and its distribution function by $\Psi(\cdot\mid\theta)$, i.e.
\begin{equation}\label{phi}\phi(x,\theta)=\frac1{\theta}1\{x\le \theta\} \mbox{ and }\Psi(x,\theta)=\frac{\min(x,\theta)}{\theta} \mbox{ respectively}, \quad x\ge0.\end{equation}
It is well known that any concave distribution function $F$ on $[0,\infty)$ allows the mixture representation (see \cit{feller})
\begin{equation}\label{eq:mixpre}F(x) = \int \Psi(x, \th) d G(\th),\end{equation}
where $G$ is a distribution function on $[0,\infty)$. In what follows, we sometimes stress this representation and denote the concave distribution function by $F_G$. In order to put a prior measure $\Pi$ on the set
$$\scr{F}=\Big\{F: F \ \text{is a concave distribution on}\ [0,\infty) \Big\},$$
we use (\ref{eq:mixpre}) together with a prior distribution $\Pi^\ast$ on the set of all mixing distribution functions $G$ on $(0,\infty)$ (denote as $\mathcal{M}$).
Having chosen such a prior measure, we denote the resulting posterior measure on $\mathcal{F}$ by $\Pi(\cdot|\mathcal{D}_n)$.

\section{Posterior consistency}
\label{sec:consistency}
In this section we establish consistency of the posterior distribution $\Pi(\cdot|\mathcal{D}_n)$ under a weak condition on the prior measure $\Pi$. Generally, the posterior is said to be consistent at $F_0$
(with respect to a semimetric $d$) if for any $\eps>0$, $\EE_0 \Pi(d(F,F_0) >\eps \mid \mathcal{D}_n) \to 0$ when $n\to \infty$.

For any distribution function $G$, denote $G_{i,j}=G(T_{i,j})-G(T_{i,j-1})$. Given the inspection times $\{T^{i},1\le i\le n\}$, we say that  distribution functions $G$ and $F$ belong to the same equivalence class
if the increments between the adjacent times are the same: $G_{i,j}=F_{i,j}$ for all $i=1,\dots,n$, $j=1,\dots,K_i+1$. Then given data $\mathcal{D}_n$, we define a distance $d$ between two (equivalence classes of)
distribution functions $G$ and $F$ by
\begin{equation}\label{lossfunc}d_n(G,F)=\frac1{n}\sum_{i=1}^n\sum_{j=1}^{K_i+1}\left| G_{i,j}-F_{i,j}\right|.\end{equation}
Recall that $\Pi^\ast$ is a prior on the set $\mathcal{M}$,
then $G$ is in the weak support of $\Pi^\ast$ if every weak neighborhood of $G$
 has positive measure.

\begin{thm}\label{thm:consis}
Fix $F_0 \in \scr{F}$ and $x\in [0,\infty)$. Consider the mixed-case interval censoring model described in section \ref{sec:intro}. Assume $F_0$ has a continuous density function $f_0$ on $(0,\infty)$ with
$f_0(0)\le M<\infty$ and that the weak support of the prior distribution $\Pi^\ast$ is $\mathcal{M}$. If $\EE K^r<\infty$, for some $r>1/2$,
then for any $\epsilon>0$, we have $\PP_0$-almost surely that
$$\Pi(F\in\mathcal{F}: d_n(F,F_0)>\epsilon|\,\mathcal{D}_n)\to 0\mbox{ as }n\to\infty.$$
\end{thm}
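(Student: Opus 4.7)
The plan is to invoke Schwartz's posterior consistency theorem (Section~6.4 of \cit{GhoVaart}) in the i.i.d.\ setting $(K_i,T^i,\Delta^i)$. This reduces the problem to checking two ingredients: (i) $F_0$ lies in the Kullback--Leibler support of the prior $\Pi$ induced by $\Pi^*$ through \ref{eq:mixpre}; and (ii) there exists a sequence of tests $\phi_n$ that is uniformly exponentially consistent for $F_0$ versus $\{F\in\mathcal{F}:d_n(F,F_0)>\epsilon\}$.

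For (i), I would first identify a mixing measure $G_0$ associated with $F_0$. Because $F_0$ is concave with continuous density $f_0$, the function $f_0$ is nonincreasing, and the bound $f_0(0)\le M<\infty$ makes the candidate $dG_0(\theta)=-\theta\,df_0(\theta)$ a proper probability on $(0,\infty)$. The weak support hypothesis then ensures every weak neighborhood of $G_0$ has positive $\Pi^*$-mass. I would next check that weak convergence $G\to G_0$ entails uniform convergence $F_G\to F_0$ on $[0,L]$ (since $\Psi(x,\cdot)$ is bounded and continuous outside a $G_0$-null set under the assumed continuity of $f_0$), and that this uniform closeness, together with the lower bounds on cell probabilities provided by the mixture representation, yields a small averaged KL divergence
\begin{equation*}
\bar K(F_0,F_G)=\EE_{K,T}\sum_{j=1}^{K+1}\bigl(F_0(T_j)-F_0(T_{j-1})\bigr)\log\frac{F_0(T_j)-F_0(T_{j-1})}{F_G(T_j)-F_G(T_{j-1})}.
\end{equation*}
This delivers the KL support requirement.

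For (ii), I would relate the data-dependent $d_n$ to a Hellinger-type distance on the conditional multinomial laws. The identity $|a-b|=(\sqrt a+\sqrt b)|\sqrt a-\sqrt b|$ together with Cauchy--Schwarz yields
\begin{equation*}
\sum_{j=1}^{K_i+1}|F_{0,i,j}-F_{i,j}|\le 2\,h_i(F_0,F),\qquad h_i^2(F_0,F):=\sum_{j=1}^{K_i+1}\bigl(\sqrt{F_{0,i,j}}-\sqrt{F_{i,j}}\bigr)^2,
\end{equation*}
so $d_n(F,F_0)\le 2n^{-1}\sum_i h_i(F_0,F)$. Consequently $\{d_n(F,F_0)>\epsilon\}$ forces the empirical Hellinger quantity to exceed a constant multiple of $\epsilon$, and the latter event can be separated from $F_0$ by a standard Le Cam/Birg\'e likelihood ratio test, aggregated over a bracketing cover of the class of concave distribution functions on $[0,L]$ (whose entropy is well known to be polynomial).

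The main obstacle is the uniformity in $F$ when passing from the random $d_n$ to the deterministic expected Hellinger distance, and in controlling the tails of the cell sums when combining tests. This is where the moment assumption enters: Cauchy--Schwarz produces a factor $\sqrt{K_i+1}$ inside expectations, and $\EE K^r<\infty$ with $r>1/2$ is precisely what is needed to make $\EE\sqrt{K+1}$ finite, hence to close the uniform deviation argument. Once (i) and (ii) are in hand, Schwartz's theorem yields the desired $\PP_0$-almost sure convergence of $\Pi(F\in\mathcal{F}:d_n(F,F_0)>\epsilon\mid\mathcal{D}_n)$ to zero.
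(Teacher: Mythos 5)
Your high-level skeleton --- Schwartz's theorem reduced to (i) a KL support condition and (ii) uniformly exponentially consistent tests, with the $\EE K^r<\infty$, $r>1/2$, hypothesis entering via a $\sqrt{K+1}$ factor --- matches the structure of the paper's proof. However, there is a genuine gap in your treatment of (i), and your route for (ii) diverges from the paper's.

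On (i): you take the ``true'' mixing measure $G_0$ with $dG_0(\theta)=-\theta\,df_0(\theta)$ and argue that $G$ weakly close to $G_0$ gives $F_G$ uniformly close to $F_0$ on $[0,L]$, hence small averaged KL. The problem is that uniform (additive) closeness of $F_G$ to $F_0$ does \emph{not} control the cell-probability ratios $\bigl(F_0(t_j)-F_0(t_{j-1})\bigr)/\bigl(F_G(t_j)-F_G(t_{j-1})\bigr)$, and it is exactly these ratios that appear under the logarithm. When $K$ is large the cells are small and both numerator and denominator are $O(1/k)$; a sup-norm error of order $\epsilon$ can then swamp the denominator and make the log ratio blow up. What is really needed is a \emph{multiplicative} lower bound on the approximating cell masses, uniformly over the relevant $G$. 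The paper obtains this by first discretizing: Lemma~\ref{lem:approximate} builds a piecewise-linear concave $F_m$ whose density $f_m$ is bounded below on a growing compact set, and Lemma~\ref{lem:KL} then considers a weak neighborhood of a \emph{perturbed discrete} mixing measure $P'_{m,\epsilon}$ and shows, via the test-function argument giving $P(U_j)\ge e^{-\eta/4}w_j$, that $f_m/f_P\le e^{\eta/2}$ uniformly on that neighborhood. Nothing in the weak topology around the continuous $G_0$ delivers such a uniform ratio bound; your phrase ``lower bounds on cell probabilities provided by the mixture representation'' papers over precisely this step. Also note the weak support condition is stated for $\Pi^*$ on $\mathcal{M}$, so the neighborhood must be of a mixing measure, not of $F_0$ itself; the paper's discrete $P'_{m,\epsilon}$ is chosen to make that neighborhood open and tractable.

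On (ii): your $d_n\le 2n^{-1}\sum_i h_i$ bound is correct, and a Birg\'e/Le~Cam minimax test with bracketing entropy of concave cdfs is a reasonable alternative in principle. The paper instead builds the test directly from the MLE $\hat F_n$ of \cit{DumJong06}, sets $\Phi_n=\ind\{d_n(\hat F_n,F_0)\ge\epsilon/2\}$, controls both error probabilities uniformly by the empirical-process bound of Theorem~\ref{thm:Pollard}, and then invokes Proposition~4.4.1 of \cit{GhoRam} to upgrade to exponential error. The moment condition $\EE K^r<\infty$ with $r>1/2$ enters through the envelope bound $\sup_G|\psi_i(G)|\le n^{-1}(K_i+1)^{1/2}$, which is consonant with your remark, but you would need to carry that factor explicitly through the bracketing-entropy aggregation; as written the uniformity step you flag as the ``main obstacle'' remains unproved. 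In short: the architecture is right, but the KL-support argument as proposed does not go through and must be replaced by (or supplemented with) a discretization/perturbation construction like the paper's.
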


Note that $d_n$ in Theorem \ref{thm:consis} is a random semidistance since it depends on the inspection times $\{K_i,T^i, i=1,\dots,n\}$, also depending on $n$. Define the  measure $\mu$ on the Borel $\sigma-$field $\mathcal{B}$
on $[0,\infty)$ that  measures the ``expected proportion of inspection times contained in a Borel set $B\in{\cal B}$'' by 
\[\mu(B)=\sum_{k=1}^\infty p_K(k)k^{-1}\int g_k(t)\sum_{j=1}^k\mathbf{1}_B(t_j)dt.\]
As a special case, assume that given $k$,  $S_1,\dots,S_k$ are independent and identically distributed with density function $\xi$ on $[0,\infty)$ and $\{T_1<T_2<\dots<T_k\}$ are the ordered $S_j$'s. 
Then when $k=1$, \[\mu(B)= \int g_1(t_1)\mathbf{1}_B(t_1)dt_1=\int_B\xi(x)dx.\]
When $k=2$, for any $a\in [0,\infty)$
\begin{align*}
\mu((0,a])=&\frac1{2}\int g_2(t)(\mathbf{1}\{t_1\le a\}+\mathbf{1}\{t_2\le a\})dt
=\frac1{2}(\PP(t_1\le a)+\PP(t_2\le a))\\
&=\frac1{2}\left(1-\left(1-\int_0^a\xi(x)dx\right)^2+\left(\int_0^a\xi(x)dx\right)^2\right)=\int_0^a\xi(x)dx
\end{align*}
Hence,  the measure $\mu$ has density $\xi$ in interval-censoring cases 1 and 2.

The follow result establishes posterior consistency with respect to $L_1(\mu)$ loss.
\begin{thm}\label{corr:L1mu}
Let $F_0$, $\Pi$ and $K$ satisfy the conditions of Theorem \ref{thm:consis}. Then for any $\epsilon>0$, we have
\[\EE_0\,\Pi\left(F\in\mathcal{F}: \int \left| F-F_0 \right| d\mu>\epsilon\mid\mathcal{D}_n\right)\to 0\mbox{ as }n\to\infty.\]
\end{thm}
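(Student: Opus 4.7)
The plan is to reduce the $L^1(\mu)$-consistency to Theorem \ref{thm:consis} via a Glivenko--Cantelli-type uniform law of large numbers. Introduce the auxiliary random functional
\[
H_n(F) := \frac{1}{n}\sum_{i=1}^n \frac{1}{K_i}\sum_{j=1}^{K_i}|F(T_{i,j}) - F_0(T_{i,j})|,
\]
and first establish the pointwise inequality $H_n(F)\le d_n(F,F_0)$ for every $F\in\mathcal{F}$ and every realisation of the data. Writing $\psi := F - F_0$ and using $\psi(0)=\psi(\infty)=0$, the telescoping identity $\psi(T_{i,j}) = \sum_{\ell=1}^{j}(\psi(T_{i,\ell}) - \psi(T_{i,\ell-1}))$ combined with the triangle inequality yields $|\psi(T_{i,j})| \le \sum_{\ell=1}^{K_i+1}|\psi(T_{i,\ell}) - \psi(T_{i,\ell-1})| = d_n^{(i)}(F,F_0)$ for every $j=1,\dots,K_i$, where $d_n^{(i)}$ is the $i$-th summand of $d_n$. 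Averaging over $j$ and $i$ gives the claim.

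Second, I would prove the uniform convergence
\[
\sup_{F \in \mathcal{F}} \Bigl|H_n(F) - \int |F - F_0|\,d\mu\Bigr| \to 0 \qquad \PP_0\text{-a.s.}
\]
Setting $Z_i(F) = K_i^{-1}\sum_j |F(T_{i,j})-F_0(T_{i,j})|$, the variables $Z_i(F)$ are i.i.d., bounded by one, and a direct computation using the definition of $\mu$ yields $\EE_0 Z_1(F) = \int |F-F_0|\,d\mu$. I would then stratify by $K_i=k$: for each $k$ with $p_K(k)>0$ the class $\{(t_1,\dots,t_k)\mapsto k^{-1}\sum_{j=1}^k |F(t_j)-F_0(t_j)| : F\in\mathcal{F}\}$ is Glivenko--Cantelli under $g_k(t)\,dt$, since $\mathcal{F}$ is contained in the universal Glivenko--Cantelli class of monotone $[0,1]$-valued functions and absolute value, summation and averaging preserve this property. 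Combining the within-stratum GC with the strong law $N_k/n \to p_K(k)$, where $N_k=\#\{i:K_i=k\}$, and truncating the tail $\sum_{k>K_0} p_K(k)$, which is arbitrarily small for large $K_0$, delivers the required uniform convergence.

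To conclude, fix $\epsilon>0$. For every $F\in\mathcal{F}$ satisfying $d_n(F,F_0)\le\epsilon$ and $\int|F-F_0|\,d\mu>2\epsilon$, step one forces $H_n(F)\le\epsilon$, so the deviation $\int|F-F_0|\,d\mu - H_n(F)$ exceeds $\epsilon$. Consequently
\[
\Pi\Bigl(\int |F-F_0|\,d\mu > 2\epsilon \,\Big|\, \mathcal{D}_n\Bigr) \;\le\; \Pi\bigl(d_n(F,F_0)>\epsilon \,\big|\, \mathcal{D}_n\bigr) + \mathbf{1}\Bigl\{\sup_{F\in\mathcal{F}}\Bigl|H_n(F) - \int |F-F_0|\,d\mu\Bigr| > \epsilon\Bigr\}.
\]
Taking $\EE_0$ and invoking bounded convergence, the first term vanishes by Theorem \ref{thm:consis} and the second by step two. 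The hard part is the uniform law: the random dimension $K_i$ and the weights $1/K_i$ preclude a one-line appeal to a standard Glivenko--Cantelli result, so one must stratify by $k$ and control the tail using only $\sum_k p_K(k)=1$; no additional use of the moment condition $\EE K^r<\infty$ beyond what Theorem \ref{thm:consis} already requires is needed here.
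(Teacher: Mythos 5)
Your proposal is correct, and it mirrors the paper's overall architecture: the reduction $H_n(F)\le d_n(F,F_0)$ (your $H_n$ is the paper's $d_n'$, and the telescoping argument you give is what the paper cites from D\"umbgen, Freitag and Jongbloed (2006)), then a uniform law of large numbers $\sup_{F\in\mathcal{F}}|H_n(F)-\int|F-F_0|\,d\mu|\to 0$, and finally the decomposition of the posterior mass on the indicator of the bad event plus the set $\{d_n>\epsilon\}$, with Theorem \ref{thm:consis} finishing the job. Where you diverge is in how the uniform law is established. The paper invokes Pollard's maximal-inequality/chaining theorem (their Theorem \ref{thm:Pollard}), verifying an envelope condition, a truncation condition with $\delta_n\to 0$, and a covering-number bound $\log\mathcal{N}(u,\mathcal{F},\rho_n)\lesssim u^{-1}$ for a data-dependent pseudometric $\rho_n$, the last via an entropy estimate for monotone functions from van de Geer. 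You instead stratify on the value of $K_i=k$, use the universal Glivenko--Cantelli property of uniformly bounded monotone functions (preserved under composition with $|\cdot|$ and finite averaging, so each fixed-$k$ class is GC under $g_k$), and control the tail $k>K_0$ by the crude bound $N_k/n\to p_K(k)$ together with $\sum_{k>K_0}p_K(k)$ small. Both are valid. Pollard's route handles all $k$ at once, yields an explicit exponential tail bound, and is the natural companion to the analogous entropy computation the paper already does in the proof of Lemma \ref{lem:test}; your stratification route is more elementary, avoids the random-metric covering argument, and makes transparent that no moment condition on $K$ beyond the one already present in Theorem \ref{thm:consis} is required. One small point to tighten: when you assert that ``absolute value, summation and averaging preserve'' the GC property, it would be cleaner to cite a preservation theorem (e.g.\ van der Vaart and Wellner's GC preservation results) and to spell out that the stratified empirical average converges a.s.\ uniformly in $F$ because, for each fixed $k\le K_0$, the empirical process over the (random but growing) subsample $\{i:K_i=k\}$ is a.s.\ uniformly convergent and $N_k/n\to p_K(k)$ a.s.
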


\subsection{Proofs}
For proving Theorem \ref{thm:consis} we  use  Schwartz' approach to derive  posterior consistency. In the proof of this theorem, Lemma \ref{lem:KL} is used to control the prior mass of a neighbourhood of the true distribution. Lemma \ref{lem:test} provides appropriate test functions. Both lemmas are stated below; the proofs are in appendix \ref{sec:app_paper2}.

\begin{lem}\label{lem:KL}
Let $F_0$ and $\Pi^\ast$ satisfy the conditions of Theorem \ref{thm:consis}. Define, for $F_1,F_2\in\mathcal{F}$, $k=1,2,\ldots$ and $t\in \mathcal{C}_k$ as defined in (\ref{eq:defCk}):
\begin{equation}\label{eq:h12}
h_{k,F_1,F_2}(t)=\sum_{j=1}^{k+1}(F_0(t_j)-F_0(t_{j-1}))\log\frac{F_1(t_j)-F_1(t_{j-1})}{F_2(t_j)-F_2(t_{j-1})}
\end{equation}
(where $t_0=0$ and $t_{k+1}=\infty$ by convention).
If we define,
\begin{equation}\label{eq:Sdelta}S(\eta)=\biggl\{F\in\scr{F}\: : \: \sum_{k=1}^\infty p_K(k)\int g_k(t)h_{k,F_0,F}(t)dt<\eta \biggr\}.\end{equation}
then for all $\eta>0$, $\Pi(S(\eta))>0$.
\end{lem}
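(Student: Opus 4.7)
The plan is to apply Schwartz's approach through the prior $\Pi^\ast$ on $\mathcal{M}$: exhibit a subset of mixing distributions of positive $\Pi^\ast$-mass whose image under $G \mapsto F_G$ lies in $S(\eta)$. By the Feller representation~(\ref{eq:mixpre}), $F_0 = F_{G_0}$ for some $G_0 \in \mathcal{M}$. For a parameter $\epsilon \in (0, 1/2)$ to be chosen, fix an auxiliary distribution $H$ supported on $[L, 2L]$ with a density bounded above and below there (e.g.\ uniform on $[L, 2L]$) and set $\tilde G_\epsilon = (1 - \epsilon) G_0 + \epsilon H \in \mathcal{M}$. Linearity of $G \mapsto F_G$ yields $F_{\tilde G_\epsilon} = (1-\epsilon)F_0 + \epsilon F_H$, hence the key pointwise increment bound $F_{\tilde G_\epsilon}(b) - F_{\tilde G_\epsilon}(a) \ge (1-\epsilon)(F_0(b) - F_0(a))$ for every $0 \le a < b$.

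Since $F_{\tilde G_\epsilon}$ is continuous, Pólya's theorem converts weak convergence $G \to \tilde G_\epsilon$ into uniform convergence $\|F_G - F_{\tilde G_\epsilon}\|_\infty \to 0$, so a weak neighborhood $U$ of $\tilde G_\epsilon$ can be shrunk to enforce $\|F_G - F_{\tilde G_\epsilon}\|_\infty \le \delta$ for any preassigned $\delta > 0$. The placement of $H$'s mass in $(L, 2L)$ together with the representation $f_G(x) = \int_x^\infty \theta^{-1} dG(\theta)$ and the Portmanteau theorem deliver, for $G \in U$ and $U$ sufficiently small, a uniform lower bound $f_G \ge c > 0$ on $[0, L]$ and a uniform lower bound on $1 - F_G(t_k)$ for $t_k \in [0, L]$. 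Together with $f_0 \le f_0(0) \le M$, which follows from concavity of $F_0$, this gives $p_j/q_j \le M/c$ for every $j$. By the weak support hypothesis on $\Pi^\ast$, $\Pi^\ast(U) > 0$, so it suffices to check that $F_G \in S(\eta)$ uniformly over $G \in U$.

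The principal obstacle is bounding the integrand $h_{k, F_0, F_G}(t)$ uniformly in the partition, which can be arbitrarily fine. I would split the sum in~(\ref{eq:h12}) at a threshold $\rho > 0$. For indices with $p_j \ge \rho$, the lower bound $q_j \ge (1-\epsilon)p_j - 2\delta \ge \rho/4$ (valid once $\epsilon \le 1/2$ and $\delta \le \rho/8$) combined with $|p_j - q_j| \le \epsilon + 2\delta$ and the elementary inequality $\log(p_j/q_j) \le (p_j - q_j)^+/q_j$ leads to a contribution of at most $4(\epsilon + 2\delta)/\rho$. For indices with $p_j < \rho$, $p_j \le \sqrt{\rho p_j}$ and the density bounds give $p_j \log(p_j/q_j) \le \sqrt{\rho p_j}\log(M/c)$; the Cauchy-Schwarz inequality $\sum_{j=1}^{k+1} \sqrt{p_j} \le \sqrt{k+1}$ then bounds this part by $\sqrt{\rho(k+1)}\log(M/c)$. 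After integration against $g_k$ and summation against $p_K$, the moment hypothesis $\EE K^r < \infty$ with $r > 1/2$ gives $\EE\sqrt{K+1} < \infty$, so the total is at most
\[
\frac{4(\epsilon + 2\delta)}{\rho} + \sqrt{\rho}\,\log(M/c)\,\EE\sqrt{K+1}.
\]
Choosing $\rho$ small enough to kill the second term and then $\epsilon, \delta$ much smaller to kill the first forces this quantity below $\eta$. The tail term $j = k+1$ is handled analogously via the uniform lower bound on $1 - F_G(t_k)$, completing the argument and yielding $\Pi(S(\eta)) \ge \Pi^\ast(U) > 0$.
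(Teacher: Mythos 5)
Your route is genuinely different from the paper's: you work directly with the Feller mixing measure $G_0$ of $F_0$ and perturb it by mixing in a fraction $\epsilon$ of a fixed distribution on $[L,2L]$, whereas the paper first builds an explicit piecewise-linear concave approximation $F_m$ of $F_0$ via a discrete mixing measure $P_m$ (Lemma~\ref{lem:approximate}), and only then perturbs the atoms of $P_m$ to show a weak neighbourhood sits inside $S(\eta)$. Your threshold-split plus Cauchy--Schwarz argument is a reasonable alternative, and it uses $\EE\sqrt{K+1}<\infty$ (hence $r\ge 1/2$), where the paper's approximation lemma only needs $\EE\log(K+1)<\infty$; both are available under the hypotheses.

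There is, however, a genuine gap in the closing step. You write that one should ``choose $\rho$ small enough to kill the second term and then $\epsilon,\delta$ much smaller to kill the first,'' but this ordering is circular: the uniform lower bound $c$ on $f_G$ over $(0,L]$ is produced precisely by the mass $\epsilon$ that $\tilde G_\epsilon$ places on $[L,2L]$, so $c$ is of order $\epsilon/(2L)$. Shrinking $\epsilon$ after $\rho$ is fixed therefore sends $\log(M/c)\to\infty$ and can push the term $\sqrt{\rho}\,\log(M/c)\,\EE\sqrt{K+1}$ back above the budget. As stated the argument does not close. The fix is to reverse the order: fix $\epsilon$ first (which, with $\delta$ small enough, fixes $c\asymp\epsilon$), take $\rho$ of order $\epsilon/\eta$ so that $4\epsilon/\rho<\eta/3$, and note that the second term is then of order $\sqrt{\epsilon}\log(1/\epsilon)$, which vanishes as $\epsilon\to 0$; only then choose $\delta\le\rho/8$. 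You should make this explicit. A smaller technical point worth flagging: the lower bound on $f_G$ for $G$ in a weak neighbourhood of $\tilde G_\epsilon$ does not follow directly from Portmanteau applied to $\theta\mapsto\theta^{-1}\mathbf{1}\{\theta\ge x\}$, which is not continuous in $\theta$; instead bound $f_G(x)\ge(2L)^{-1}G\bigl((L,2L)\bigr)$ for all $x\le L$ and apply Portmanteau to the open set $(L,2L)$, and argue similarly for the lower bound on $1-F_G(t_k)$.
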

Note that that for the specific choice $F_1=F_0$, by Jensen's inequality, $h_{k,F_0,F}\ge 0$ for all $F\in {\cal F}$.

\begin{lem}\label{lem:test}
For $\epsilon>0$, define $U_{\epsilon}:=\{F\in\mathcal{F}:d_n(F,F_0)>\epsilon\}$. Then there exists a sequence of test functions $\Phi_n$ such that for all $n\ge 1$,
\begin{equation}
\label{tests}
\begin{split}
\EE_0(\Phi_n)\le Ce^{-nc}\\
\EE_{(K,T)}\left\{\sup_{F\in U_{\epsilon}}\EE_F[1-\Phi_n|K,T]\right\}\le Ce^{-nc}
\end{split}\end{equation}
for some positive constants $c$ and $C$.
\end{lem}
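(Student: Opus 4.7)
The plan is to construct the tests by exploiting the conditional independence structure: given the inspection times $\mathbf T=(K_i,T^i)_{i=1}^n$, the indicator vectors $\Delta^i$ are independent multinomials whose distributions under $\PP_0$ differ from those under $\PP_F$ precisely to the extent measured by $d_n(F,F_0)$. A natural pointwise test for $F_0$ versus a single $F\in U_\epsilon$ is built from the sign-based contrast
\[
S_n(F)=\frac1n\sum_{i=1}^n\sum_{j=1}^{K_i+1}\mathrm{sgn}(F_{i,j}-F_{0,i,j})\,(\Delta_{i,j}-F_{0,i,j}).
\]
Because exactly one coordinate of each $\Delta^i$ is a $1$, each summand of $S_n(F)$ lies in $[-2,2]$ \emph{uniformly in $K_i$}, and a direct calculation gives $\EE_0[S_n(F)\mid\mathbf T]=0$ and $\EE_F[S_n(F)\mid\mathbf T]=d_n(F,F_0)$. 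Conditional Hoeffding then yields the exponential bounds $\PP_0(S_n(F)>\epsilon/2\mid\mathbf T)$ and $\PP_F(S_n(F)\le\epsilon/2\mid\mathbf T)$ at most $e^{-cn\epsilon^2}$ for any $F\in U_\epsilon$.

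To obtain a single test $\Phi_n$ that is uniformly consistent over $U_\epsilon$, I would cover the restrictions to $[0,L]$ of concave distribution functions (all that $d_n$ sees, since $T_{i,j}\in(0,L]$) by a finite $\eta$-net $\{F^{(k)}\}_{k\le N(\eta)}$ in the supremum norm; compactness of this class gives $N(\eta)<\infty$. If $\|F-F^{(k)}\|_\infty\le\eta$, then $d_n(F,F^{(k)})\le 2\eta\, n^{-1}\sum_i(K_i+1)$, so on the high-probability event $A_n=\{n^{-1}\sum_i(K_i+1)\le B_n\}$ one picks $\eta=\epsilon/(8B_n)$ to guarantee $d_n(F,F^{(k)})\le\epsilon/4$ and hence $\EE_F[S_n(F^{(k)})\mid\mathbf T]\ge d_n(F^{(k)},F_0)-d_n(F,F^{(k)})\ge\epsilon/2$ by the triangle inequality. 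Setting
\[
\Phi_n=\mathbf 1_{A_n^c}\vee\max_{k\le N(\eta)}\mathbf 1\{S_n(F^{(k)})>3\epsilon/8\},
\]
a union bound across the $N(\eta)$ net points turns the pointwise Hoeffding estimates into the Type~I and Type~II inequalities in \eqref{tests}: on $A_n^c$ the test always rejects so the Type~II contribution vanishes there, and on $A_n$ the net point associated to any $F\in U_\epsilon$ detects the alternative with probability $\ge 1-e^{-cn\epsilon^2}$.

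The main technical obstacle is simultaneously controlling three scales: the metric entropy of the concave class (growing like $\log N(\eta)\lesssim\eta^{-1/2}$), the tail $\PP(A_n^c)$ of the averaged number of inspections, and the Hoeffding exponent $n\epsilon^2$ that must swallow both. The assumption $\EE K^r<\infty$ with $r>1/2$ is what makes this feasible: a careful truncation of $K$ at a level growing like $n^{1/r}$ lets one pick $B_n$ slowly enough that $\PP(A_n^c)$ remains subexponentially small while $N(\eta_n)$ stays subexponential in $n$, so after integration over $\mathbf T$ both error bounds end up of order $Ce^{-cn}$ as required.
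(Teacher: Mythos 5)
Your pointwise construction is sound and rather elegant: because each $\Delta^i$ has exactly one nonzero coordinate, the inner sums of $S_n(F)$ are bounded in $[-2,2]$ uniformly in $K_i$, and the mean computations under $\PP_0$ and $\PP_F$ are exactly as you state, so conditional Hoeffding gives exponential separation for any \emph{fixed} $F$. The trouble is entirely in the uniformization step. First, the class $\mathcal{F}$ of concave distribution functions is \emph{not} totally bounded in supremum norm: taking $F_a(x)=\min(x/a,1)$, one has $\|F_a-F_b\|_\infty=1-a/b$ for $a<b$, so $\{F_{2^{-k}}:k\ge0\}$ is an infinite $\tfrac12$-separated set and no finite sup-norm $\eta$-net exists. (This is precisely why the paper works with bracketing/covering in $L_2(\nu)$ for a finite measure $\nu$, where the entropy is of order $u^{-1}$, rather than in sup-norm.) Second, even if one replaces the sup-norm net by an $L_1$-type net, linking closeness of $F$ and $F^{(k)}$ to closeness in $d_n$ still produces the multiplicative factor $n^{-1}\sum_i(K_i+1)$, and the standing assumption is only $\EE K^r<\infty$ for \emph{some} $r>1/2$. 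For $r\in(1/2,1)$, $K$ may well have infinite mean, in which case $n^{-1}\sum_i K_i$ diverges almost surely and $\PP(A_n^c)$ tends to one for any polynomially growing $B_n$; and if $B_n$ is chosen to grow fast enough that $\PP(A_n^c)$ is small, the tail is still only polynomial under a bare $r$-th moment, never $e^{-cn}$. So the claim at the end of your sketch that a truncation at level $n^{1/r}$ makes $\PP(A_n^c)$ subexponential does not hold under the hypotheses of the theorem.

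The paper's route avoids both obstructions by \emph{not} attempting a direct exponential construction. It takes the single test $\Phi_n=\mathbf 1\{d_n(\hat F_n,F_0)\ge\epsilon/2\}$, where $\hat F_n$ is the concave MLE, establishes only that the type~I and (uniformly over $U_\epsilon$) type~II errors tend to zero via a Pollard-type uniform law of large numbers (with a polynomial rate coming from the term $b_n=n^{-1}\sum_i(K_i+1)^r(n\delta_n)^{-2\kappa}$), and then invokes the standard boosting result (Proposition~4.4.1 in Ghosh and Ramamoorthi) to upgrade merely consistent tests into uniformly exponentially consistent ones. This sidesteps entirely the need for exponential control of $\PP(A_n^c)$ or for a sup-norm net. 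Your construction could be salvaged along the same lines: show that the sign-based test has type~I and type~II errors tending to zero (for which a data-dependent $L_1$ covering in the spirit of van de Geer suffices), and then appeal to the boosting lemma; but as written, the claim of directly obtaining $Ce^{-nc}$ bounds is unjustified.
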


\begin{proof}[Proof of Theorem \ref{thm:consis}]
Choose $\epsilon>0$ and define the set $U_{\epsilon}$ as in Lemma \ref{lem:test}.
Define
\[ Z_{ij} = \frac{F(T_{i,j})-F(T_{i,j-1})}{F_0(T_{i,j})-F_0(T_{i,j-1})}. \]
 Using expression (\ref{eq:likelih}) of the likelihood, the posterior mass of the set $U_{\epsilon}$ can be written as
\[\Pi(U_{\epsilon}\mid\mathcal{D}_n)=D_n^{-1}\int_{U_{\epsilon}}\prod_{i=1}^n\prod_{j=1}^{K_i+1}Z_{i,j}^{\Delta_{i,j}}d\Pi(F),\]
where
$$D_n=\int\prod_{i=1}^n\prod_{j=1}^{K_i+1}Z_{i,j}^{\Delta_{i,j}}d\Pi(F).$$
Fix $0<\eta<c/2$, where $c$ is as it appears in Lemma \ref{lem:test}. Also fix $F\in S(\eta)$.

We first show that Lemma \ref{lem:KL} implies for any $\eta'>\eta$ we have $\PP_0$-a.s.\  that
\[D_n\ge \exp(-n\eta')\Pi(S(\eta))\] for all $n$ sufficiently large.
By Lemma \ref{lem:KL}, we have $\Pi(S(\eta))>0$. Let $\Pi_{S(\eta)}$ be $\Pi$ restricted to $S(\eta)$ and normalised to a probability measure. For $i\ge 1$  define
\[	Y_{i,j}=-\int\Delta_{i,j}\log Z_{i,j} d\Pi_{S(\eta)}(F) \:\ind_{\{1,2,\ldots,K_i+1\}}(j).\] Note that,
\begin{align*}
\EE_0\left[\sum_{j=1}^{K_1+1}Y_{1,j}\right]&=\EE_{K_1,T_1}\left[\EE_{F_0}\left[\sum_{j=1}^{K_1+1} Y_{1,j}\mid T^{K_1},K_1\right]\right]\\
&=\EE_{K_1,T_1}\left[\sum_{j=1}^{K_1+1}\int -(F_0(T_{1,j})-F_0(T_{1,j-1}))\log Z_{i,j} d\Pi_{S(\eta)}(F)\right]\\
&=\sum_{k=1}^\infty p_K(k)\int\int g_k(t)h_{k,F_0,F}(t)dtd\Pi_{S(\eta)}(F)\le\eta<\infty.
\end{align*}
Therefore, the law of large numbers yields
\[\frac1{n}\sum_{i=1}^n\sum_{j=1}^{K_i+1}Y_{i,j}\to \EE_0\left[\sum_{j=1}^{K_1+1}Y_{1,j}\right]\le\eta, \qquad \PP_0-a.s.\]
Hence, $\PP_0$-a.s.\  for any $\eta'>\eta$,
\begin{align}
D_n &\ge\int_{S(\eta)}\prod_{i=1}^n\prod_{j=1}^{K_i+1}Z_{i,j}^{\Delta_{i,j}}d\Pi(F)
=\Pi(S(\eta))\int\prod_{i=1}^n\prod_{j=1}^{K_i+1}Z_{i,j}^{\Delta_{i,j}}d\Pi_{S(\eta)}(F)
\nonumber \\
&=\Pi(S(\eta))\int\exp\left(\sum_{i=1}^n\sum_{j=1}^{K_i+1}\Delta_{i,j}\log Z_{i,j}\right)\: d\Pi_{S(\eta)}(F)\nonumber\\
&\ge \Pi(S(\eta))\exp\left(-n\cdot\frac1n\sum_{i=1}^n\sum_{j=1}^{K_i+1}Y_{i,j}\right)\nonumber\\
\label{eq:lowerDn}
&\ge \exp(-n\eta')\,\Pi(S(\eta))
\end{align}
for $n$ sufficiently large, where we used Jensen's inequality in the second inequality.

Now we can finish the proof by combining this result with the test functions
 $\Phi_n$ satisfying (\ref{tests}) (by Lemma \ref{lem:test}).

By inequality (\ref{eq:lowerDn}), we can bound $\EE_0\Pi(U_{\epsilon}\mid\mathcal{D}_n)$ as follows,
\begin{align*}
&\EE_0\Pi(U_{\epsilon}\mid\mathcal{D}_n)=\EE_0\Pi(U_{\epsilon}\mid\mathcal{D}_n)\Phi_n+\EE_0\Pi(U_{\epsilon}\mid\mathcal{D}_n)(1-\Phi_n)\\
&\,\,\, \le\EE_0\Phi_n+
\Pi(S(\eta))^{-1}e^{n\eta'}\EE_0\int_{U_{\epsilon}}\prod_{i=1}^n\prod_{j=1}^{K_i+1}Z_{i,j}^{\Delta_{i,j}}(1-\Phi_n)d\Pi(F)\\
&\,\,\,=\EE_0\Phi_n+\Pi(S(\eta))^{-1}e^{n\eta'}\EE_{(K,T)}\int_{U_\epsilon}\EE_F(1-\Phi_n)d\Pi(F)\\
&\,\,\,\le Ce^{-cn}+\Pi(S(\eta))^{-1}\cdot Ce^{-(c-\eta')n}
= o(1) \qquad\text{as}\quad n\to\infty.
\end{align*}
The final step follows by choosing $\eta'<c$. Since $\sum_{n=1}^\infty e^{-bn}<\infty$ for any constant $b$, almost sure convergence follows by the Borel-Cantelli lemma.
\end{proof}

\medskip
\begin{proof}[Proof of Theorem \ref{corr:L1mu}]
First note that the proof of (16) in \cit{DumJong06} shows for all distribution functions $F,F_0\in\mathcal{F}$,
\begin{equation}\label{d'ndn}d'_n(F,F_0)=\frac1{n}\sum_{i=1}^nK_i^{-1}\sum_{j=1}^{K_i}\left| F(T_{i,j})-F_0(T_{i,j})\right| \le d_n(F,F_0).\end{equation}
For any $\epsilon>0$, denote set
\[A_n=\left\{\sup_{F\in\mathcal{F}}\left|d'_n(F,F_0)-\int|F-F_0|d\mu\right|>\epsilon/2\right\},\]
Now we prove that $\PP_{(K,T)}(A_n)\to 0$ as $n\to\infty$.
Fix $F_0\in\mathcal{F}$ and  denote
$$\psi_i(F)=n^{-1}K_i^{-1}\sum_{j=1}^{K_i}|F(T_{i,j})-F_0(T_{i,j})|.$$
Then $d'_n(F,F_0)=\sum_{i=1}^n\psi_i(F)$. Note that $\EE_{(K,T)} d'_n(F,F_0)=\int| F-F_0| d\mu$.
It is sufficient to show that
\begin{equation}\label{eq:supbound}\EE_{(K,T)}\sup_{F\in\mathcal{F}}|d'_n(F,F_0)-\EE_{(K,T)} d'_n(F,F_0)|\to 0.\end{equation}
By theorem \ref{thm:Pollard}, it is implied by the existence of a sequence $\delta_n\to 0$ such that
\begin{align}
&\EE_{(K,T)}\sum_{i=1}^n\sup_{F\in\mathcal{F}}|\psi_i(F)|=O(1),\label{eq:A1}\\
&\EE_{(K,T)}\sum_{i=1}^n \mathbf{1}\{\sup_{F\in\mathcal{F}}|\psi_i(F)|>\delta_n\}\sup_{F\in\mathcal{F}}|\psi_i(F)|=o(1),\label{eq:A2}\\
&\text{for any}\, u>0,\quad \log\mathcal{N}(u,\mathcal{F},\rho_n)=c(u).\label{eq:A3}
\end{align}
Here
\[\mathcal{N}(u,\mathcal{F},\rho_n)=\min\left\{\#\mathcal{G}: \mathcal{G}\subset\mathcal{F},\inf_{G\in\mathcal{G}}\rho_n(F,G)\le u\,\text{for all}\, F\in\mathcal{F}\right\},\]
and
\[\rho_n(F,F')=\sum_{i=1}^n|\psi_i(F)-\psi_i(F')|.\]
For $(\ref{eq:A1})$ and $(\ref{eq:A2})$, note that $\sup_{F\in\mathcal{F}}|\psi_i(F)|\le n^{-1}$, hence $\EE_{(K,T)}\sum_{i=1}^n\sup_{F\in\mathcal{F}}|\psi_i(F)|\le 1$.
By taking $n\delta_n\to\infty$, e.g. $\delta=\frac1{\sqrt{n}}$,
\[
\EE_{(K,T)}\sum_{i=1}^n1\{\sup_{F\in\mathcal{F}}|\psi_i(F)|>\delta_n\}\sup_{F\in\mathcal{F}}|\psi_i(F)|\le n^{-1}\EE\sum_{i=1}^n1\{n^{-1}>\delta_n\}
=1\{n^{-1}>\delta_n\}\to 0
\]
For $(\ref{eq:A3})$, note that 
\begin{align*}
\rho_n(F,F')&=\sum_{i=1}^n|\psi_i(F)-\psi_i(F')|
\le n^{-1}\sum_{i=1}^nK_i^{-1}\sum_{j=1}^{K_i}\left||F(T_{i,j})-F_0(T_{i,j})|-|F'(T_{i,j})-F_0(T_{i,j})|\right|\\
&\le n^{-1}\sum_{i=1}^nK_i^{-1}\sum_{j=1}^{K_i}|F(T_{i,j})-F'(T_{i,j})|
=\int|F-F'|d\upsilon
\le \left(\int|F-F'|^2d\upsilon\right)^{1/2}
\end{align*}
where the measure $\upsilon$ is defined by $\upsilon(\cdot)=n^{-1}\sum_{i=1}^nK_i^{-1}\sum_{j=1}^{K_i}\delta_{T_{i,j}}(\cdot)$. In the final step, we use H\"{o}lder's inequality and that  $\upsilon$ has total mass 1. Further, using Lemma 2.1 and equation (2.5) in
\cit{Geer} we obtain
\[\log\mathcal{N}(u,\mathcal{F},\rho_n)\le \log\mathcal{N}(u,\mathcal{F},L_2(\upsilon))\le Cu^{-1}\]
for some constant $C$ and any $u>0$.

Therefore, denote $B_\epsilon=\{F\in\mathcal{F}:\int|F-F_0|d\mu>\epsilon\}$, by $\PP_{(K,T)}(A_n)\to 0$ as $n\to\infty$ and inequality (\ref{d'ndn}), we have
\begin{align*}
\EE_0\Pi(B_\epsilon\mid\mathcal{D}_n)&=\EE_0\Pi(B_\epsilon\mid\mathcal{D}_n)\mathbf{1}_{A_n}+\EE_0\Pi(B_\epsilon\mid\mathcal{D}_n)\mathbf{1}_{A^c_n}\\
&\le \EE_0(1_{A_n})+\EE_0\Pi(F\in\mathcal{F}:d'_n(F,F_0)>\epsilon/2\mid\mathcal{D}_n)\\
&\le \PP_{(K,T)}(A_n)+\EE_0\Pi(F\in\mathcal{F}:d(F,F_0)>\epsilon/2\mid\mathcal{D}_n)\to0
\end{align*}
as $n\to\infty$.
\end{proof}

\section{Computational methods}
\label{sec:alg}
Assume the mixing measure $G$ is a Dirichlet process with base measure $G_0$ (with density $g_0$) and concentration rate $\alpha$. The prior distribution this induces on $\mathcal{F}$ through (\ref{eq:mixpre}) is called a
Dirichlet Mixture Process (DMP).  Denoting by  $\#(x)$  the number of distinct values in a vector $x$, a sample
$X_1,\ldots,X_n$ from the DMP can be generated using the following steps:
\begin{equation}
\label{eq:mod2}
\begin{split}
 Z:=(Z_1,\ldots, Z_n) &\sim \mathrm{CRP}(\alpha)\\
\Th_1,\ldots, \Th_{\#(Z)} &\simiid G_0 \\
X_1,\ldots, X_n \mid \Th_1,\ldots, \Th_{\#(Z)}, Z_1,\dots, Z_n& \simind \mathrm{Unif}(0, \Th_{Z_i}).
\end{split}
\end{equation}
Here  CRP$(\alpha)$ denotes the ``Chinese Restaurant Process''  that can be viewed as follows. Assume in a Chinese restaurant, the first customer sits at the first table. Then, given a number of occupied tables, the next
customer joins one of these tables with a probability proportional to the number of customers already there, or starts a new table with probability proportional to $\alpha$. Interpreting $Z_i$ as the number of customers
sitting at table $i$ after $n$ customer arrivals, this leads to a distribution on the space of partitions of the integers $\{1,2,\ldots,n\}$.

\medskip
In the interval censoring model, we do not observe the $X_i$'s, but for each $i$ the interval $(L_i,R_i]=(T_{i,J_i-1},T_{i,J_i}]$ that contains $X_i$. We are then interested in  the conditional distribution of $(Z,\Th)$ given
the data $\mathcal{D}_n$.  In case we would have complete observations $X_1,\dots,X_n$, there are algorithms to sample from this conditional distribution (see \cit{Neal}). Having only the interval censored data, we can adapt such
algorithms, treating the unobserved event times $X_i$ as latent variables in the same fashion as this is done in the case of right censoring by \cit{HanLau}. Given the exact values $X_i$, we can use existing algorithms to generate
samples from the posterior. Subsequently, we update the $X_i$'s in each iteration by sampling conditionally on the time intervals $(L_i,R_i]$ where the event happened.

\medskip
We initialise a Gibbs sampler by specifying values of  $(Z,\Th,X)$ that satisfy the constraints in the model. This means that $\Th_{Z_i}\ge X_i$ and $X_i\in(L_i,R_i]$ for $i=1,\dots,n$. For ease of notation let
$\Theta=(\Theta_1, \ldots, \Theta_{\#(Z)})$ and $X=(X_1,\dots,X_n)$ for $i=1,\dots,n$. Then the following steps are iterated:
\begin{enumerate}
  \item sample $Z \mid (X,\Th,\mathcal{D}_n)$;
  \item sample $\Th \mid (X,Z,\mathcal{D}_n)$;
  \item sample $X \mid (\mathcal{D}_n,\Th, Z)$.
\end{enumerate}
 Given $X$, $\mathcal{D}_n$ does not play any role when sampling $Z$ and $\Th$. Hence the first two steps are the same as in the case of precise observations. More details on this step in that setting can be found in \cit{JMP}
 and \cit{Neal}. The final step is to sample the latent variables $X$ given $\mathcal{D}_n$, $Z$ and $\Th$. For this, note that
\[f_{X_i \mid \mathcal{D}_n,\Th, Z}(x \mid \mathcal{D}_n,\th,z)\propto f(x \mid \th_{z_i})\mathbf{1}_{(L_i,R_i]}(x)=\phi(x \mid \th_{z_i})\mathbf{1}_{(L_i,R_i]}(x).\]

This is the density of the uniform distribution  on interval $(L_i,R_i]\cap [0,\Th_{Z_i}]$. Note that with the initialisation described above, $(L_i,R_i]\cap [0,\Th_{Z_i}]$ is non-empty.

\bigskip
Using the conjugacy property of Dirichlet process (see e.g. \cit{Fegsn}), the conditional expectation of the posterior of $F$ is given by
\[ \EE\left[ \int \Psi(x,\th) d G(\th)\, |\, \Th, Z, \mathcal{D}_n\right]
= \frac1{\alpha+n} \left(\alpha \int \Psi(x,\th) d G_0(\th) + \sum_{i=1}^n \Psi(x,\Th_{Z_i})\right). \]
Hence, the posterior mean of $F$ can be obtained using a Markov Chain Monte Carlo approximation of the posterior of $(\Th,Z)$ given $\mathcal{D}_n$. Having the algorithms to generate from the distribution of $(\Th,Z)\mid \mathcal{D}_n$,
assume in the $j$-th iteration we obtained $\left(\Th^{(j)}_{Z^{(j)}_1}, \ldots, \Th^{(j)}_{Z^{(j)}_n}\right)$.
At iteration $j$, a sample from the posterior is given by
\begin{equation}\label{eq:postmean} \hat{F}^{(j)}(x):
= \frac{\alpha}{\alpha+n} \int \Psi(x,\th) d G_0(\th) +\frac1{\alpha+n} \sum_{i=1}^n \Psi(x,\big(\Th^{(j)}_{Z^{(j)}_i}\big)). \end{equation}
 After $J$ iterations, an estimator for the posterior mean is given by $J^{-1}\sum_{j=1}^J \hat{F}^{(j)}(x)$.

\begin{rem}
	In case the Dirichlet process is truncated, the target density is of fixed dimension. One of the referees raised the question whether probabilistic programming languages such as JAGS, BUGS, Stan or Turing can be used. First of all, we do not  consider truncation here as, strictly speaking, it is not necessary. However, we fully  agree that from a practical point of view the proposed approach may be implemented using one of the suggested Bayesian computational packages in case of truncation.  What might be tricky here is that the workehorse algorithm in for example Stan (Hamiltonian Monte Carlo)  uses automatic differentiation for computing gradients. However, the density of the uniform distribution on $[0,\theta]$, viewed as a function of $\theta$ is not differentiable. 
	
	A host of related Bayesian nonparametric models have been implemented in the DP-package (Cf.\ \cit{Jara2011}). 
\end{rem}

\section{Simulation results}
\label{sec:siml}
In this section, we first study the posterior mean estimators of a concave distribution function based on simulated interval censored data. Next, we
compare the Bayesian and the frequentist methods in this setting.

We simulate data by repeating independently $n$ times the following scheme:
\begin{enumerate}
	\item sample $K$ from the discrete uniform distribution on the integers $\{1,\cdots,20\}$;
  \item sample $K$ inspection times $T_1<\dots<T_K$ by sorting $K$ independent and identically distributed random variables (we choose the  Gamma distribution with shape parameter equal to $2$ and rate parameter equal to $1$);
  \item sample $X$ from the standard Exponential distribution;
  \item set $L:=\sup_{j}\{T_j: T_j< X\}$ and $R:=\inf_{j}\{T_j: T_j\ge X\}$ (where $T_0=0$, $T_{k+1}=\infty$).
\end{enumerate}
This leads to the dataset $\mathcal{D}_n$ containing the observation intervals $(L_i,R_i]$ for  $1\le i\le n$.

The prior is specified by a Dirichtlet Process for the mixture measure. As seen in the formula (\ref{eq:postmean}), the concentration parameter $\alpha$ expresses our confidence on the prior.  As such the choice of $\alpha$ can be interpreted as a prior sample size. In the following we choose $\alpha=1$, expressing a small prior sample size. 

There is no obvious ``optimal'' choice for the base measure. The approach we take is motivated by the numerical study in \cite[section 5.1]{JMP}, where it is shown to yield a reasonable balance between computational tractability and performance.   Write $Y \sim Par(s,\xi)$ with $s>0$ and $\xi>0$ if $f_Y(y)= \xi s^\xi y^{-\xi-1} \ind\{y\ge s\}$. 
 We choose the base measure to be a mixture of $Par(s,1)$-distributions, where  $s \sim  Gamma(2,1)$. This implies that the density of the base measure, $g_0$, satisfies $g_0(\theta) \sim \theta$ for $\theta \approx 0$.  The Pareto-distribution is a conjugate prior to the Uniform distribution, alleviating computations in a Gibbs sampler. Mixing over $s$ is a practical way to robustify the prior. 
 
 For updating $S$, it follows from the short computation in section 5.1 in \cite{JMP}  that 
\[  f_{S\mid \Th,Z}(s\mid  \th, z) \propto s^{\#(z)} e^{-s} \ind\{s \le \wedge(\th)\}. \]

We take sample size $n=100$.
To show the algorithm's performance, we show a traceplot and autocorrelation function of $\hat{F}^j(1)$ over $30.000$ iterations in Figure \ref{fig:itacf}.

\begin{figure}[!htp]
\centering
{
\includegraphics[scale=0.45]{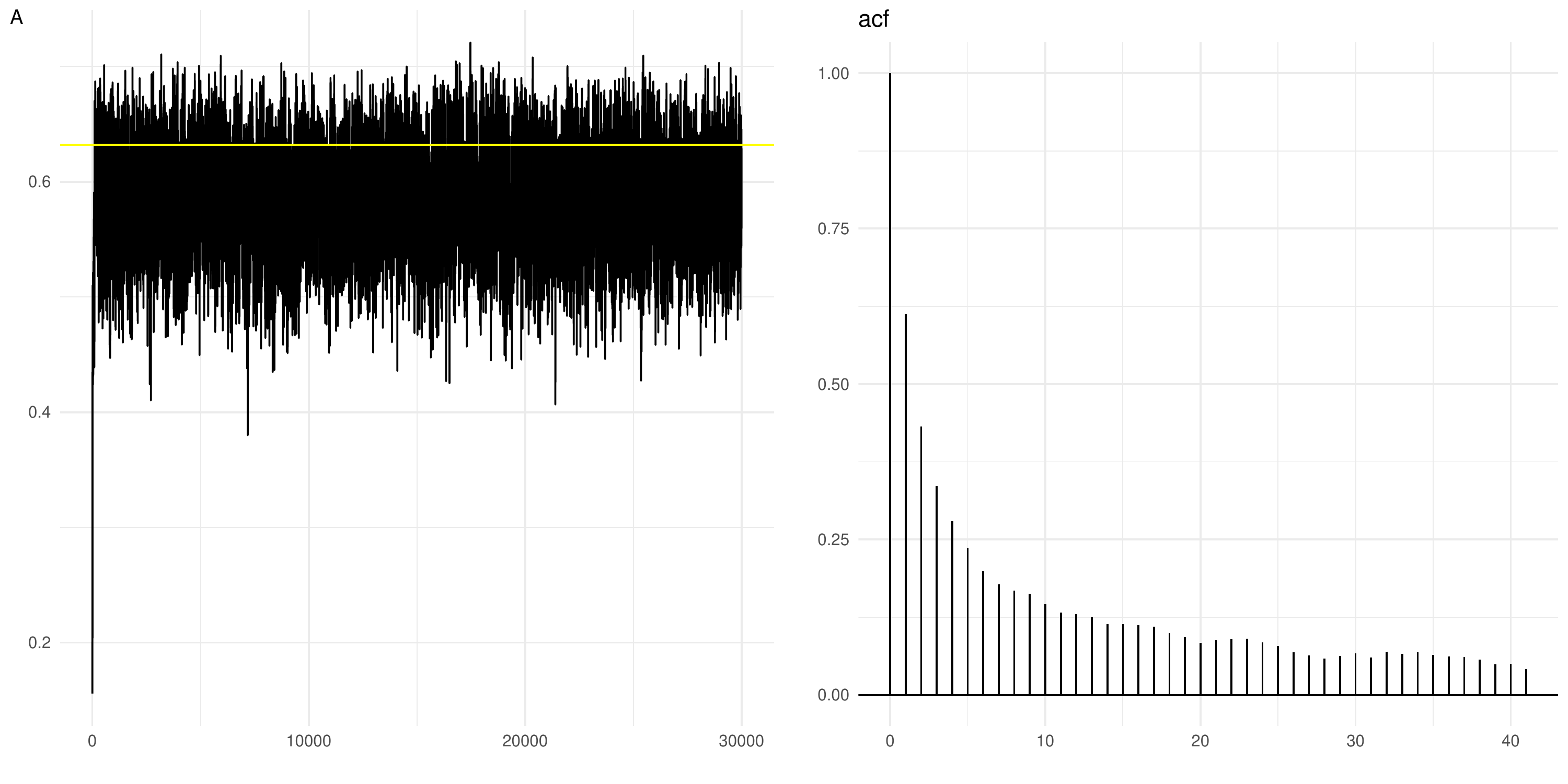}
}
\caption{Traceplot (left) and autocorrelation plot (right) for the  posterior distribution function evaluated at $1$ using the  algorithm detailed in Section \ref{sec:alg}. The horizontal line in the left-hand figure  depicts the true value
$F_0(1)=1-e^{-1}$. \label{fig:itacf} }
\end{figure}

We compute the posterior mean estimator for the function $F_0$ using equation (\ref{eq:postmean}) for two samples from the standard exponential distribution: one with sample size $50$ and the other with sample size $500$.
Figure \ref{fig:bayes} shows the results.  The total number of MCMC iterations was chosen to be $30,000$, with $15,000$ burn-in iterations.
\begin{figure}[!htp]
\centering
{
\includegraphics[scale=0.6]{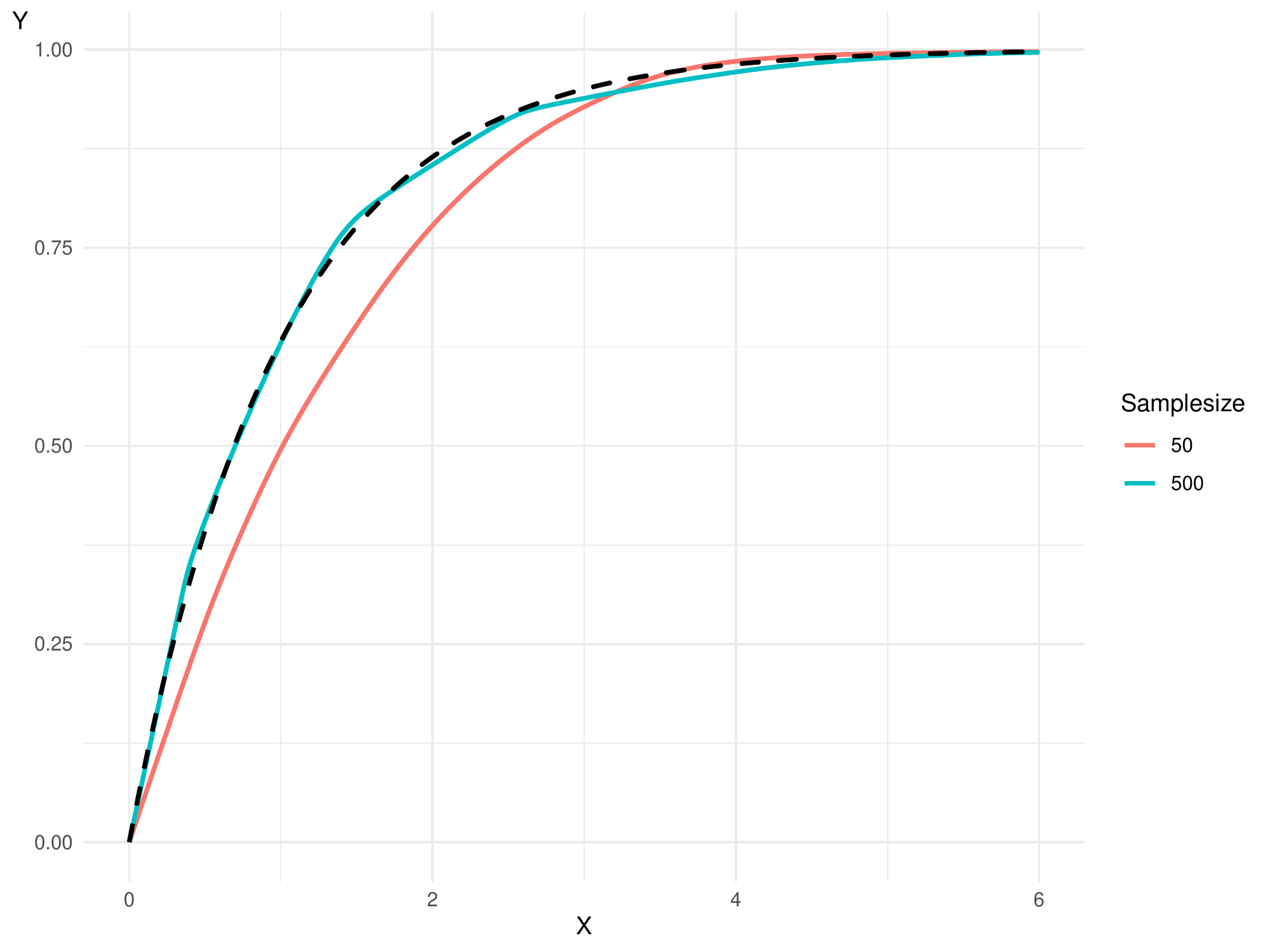}
}
\caption{Posterior mean in case the data are sampled from the standard exponential distribution. The two solid lines depict the posterior mean based on sample size either $50$ or $500$. The dashed  curve depicts the true  distribution function.
The total number of MCMC iterations was chosen to be $30.000$, with $15.000$ burn-in iterations.}
\label{fig:bayes}
\end{figure}

\medskip

We now compare different estimation methods:
\begin{itemize}
  \item the posterior mean for a concave distribution function;
  \item the maximum likelihood estimator under concavity;
  \item the maximum likelihood estimator without shape constraints.
\end{itemize}
We took $n=500$ and considered $K_i=1$, $K_i=2$ for $i=1,\dots,n$ (interval censoring case 1 and 2) and $K_i$ independently sampled from the discrete uniform distribution on the integers $\{1,2,\dots,20\}$, which we denote by $K\sim\mbox{Unif}(1,20)$.

We use the same prior specification as before. Figure \ref{fig:compare} depicts the estimators $\hat{F}$ (here we have three estimators: the NPMLE using the algorithm in \cit{WellnZhan}, the concave MLE studied in \cit{DumJong06} and the Bayesian posterior mean estimator)
and error curves $\hat{F}-F_0$, where $F_0$ is the true underlying distribution function. As the true distribution is smooth it is not surprising that NPMLE performs worst, as it is a step function. With an increasing number of inspection times,
the procedure of generating the inspection time and event time gives a narrow inspection interval for each event. Although the NPMLE does not consider the concavity assumption on $F_0$, it suggests a concave shape.
As can be seen in all cases, the concave MLE and the posterior mean estimator behave similarly.
\begin{figure}[!htp]
\centering
{
\includegraphics[scale=0.35]{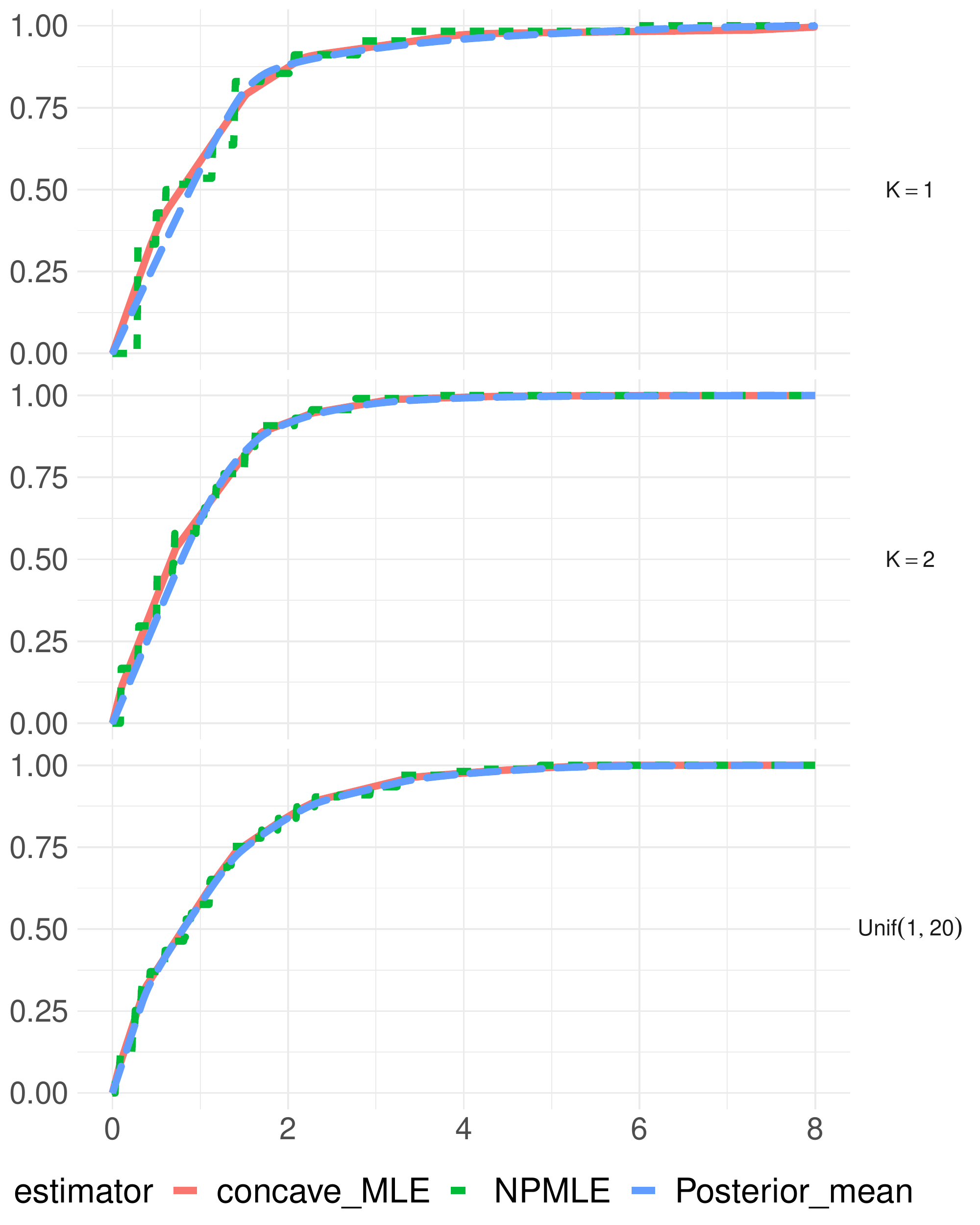}
\includegraphics[scale=0.35]{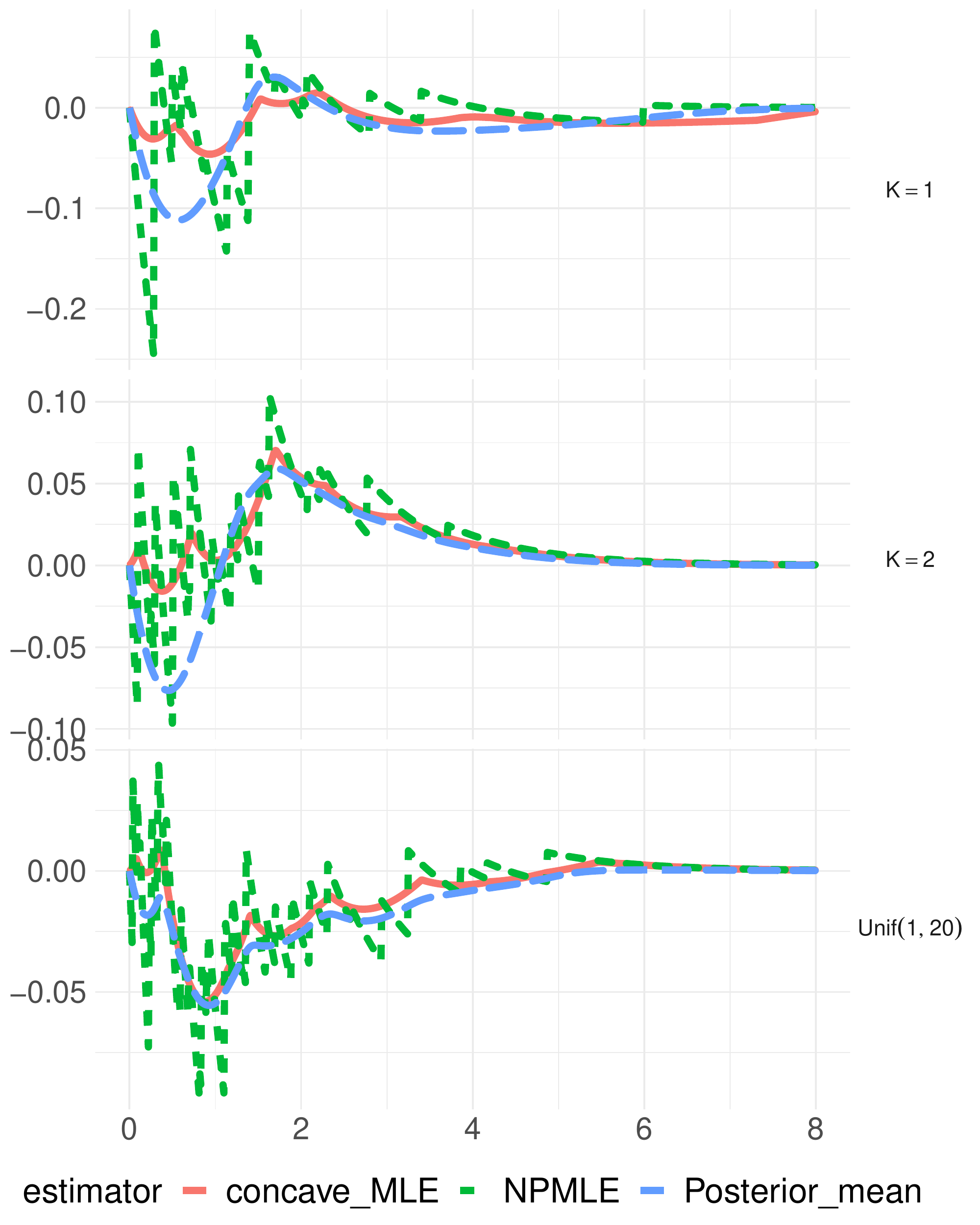}
}
\caption{Left: three cumulative distribution function estimators for $F_0$ (posterior mean, NPMLE and concave MLE). Right: corresponding error curves showing  $\hat{F}-F_0$. 
From top to bottom:  inspection time $K=1$, $K=2$ and $K\sim \mbox{Unif}(1,20)$. The data consists of $n=500$ independent draws  from the standard exponential distribution.}\label{fig:compare}
\end{figure}

Using the setting of mixed interval censoring ($K\sim \mbox{Unif}(1,20)$), we generated 50 data sets of sizes $n=50,100,200,400,800$ from the standard exponential and half-normal distribution and computed the NPMLE, the concave MLE, the posterior mean for each of the cases. Fix grid points
$t_j=j/100,j=1,\dots,m$, where we took $m=800$.
Figures \ref{fig:gridMSE_exp} and \ref{fig:gridMSE_halfnorm} show the log of the mean square error of $\hat{F}$ evaluated at $t=t_j, j=1,\dots,m$ for each sample size $n$, that is
\[\log R(\hat{F},F)(t)=\log \frac1{50}\sum_{k=1}^{50}(\hat{F}^{(k)}(t)-F(t))^2\]
where $\hat{F}^{(k)}$ represent estimator based on the $k-$th data set. We see that all three estimators give small error. As seen from figure \ref{fig:compare}, it can be explained by the setting of how to generate
mixed interval censoring data. We see that the posterior mean gives smallest error when $t$ is small, whereas all three estimators are comparable when $t\in [1,4]$ of case $n=800$. Finally, the NPMLE performs best when $t$ is big based on the data sets sample from the half-normal distribution.
\begin{figure}[!htp]
\centering
{
\includegraphics[scale=0.75]{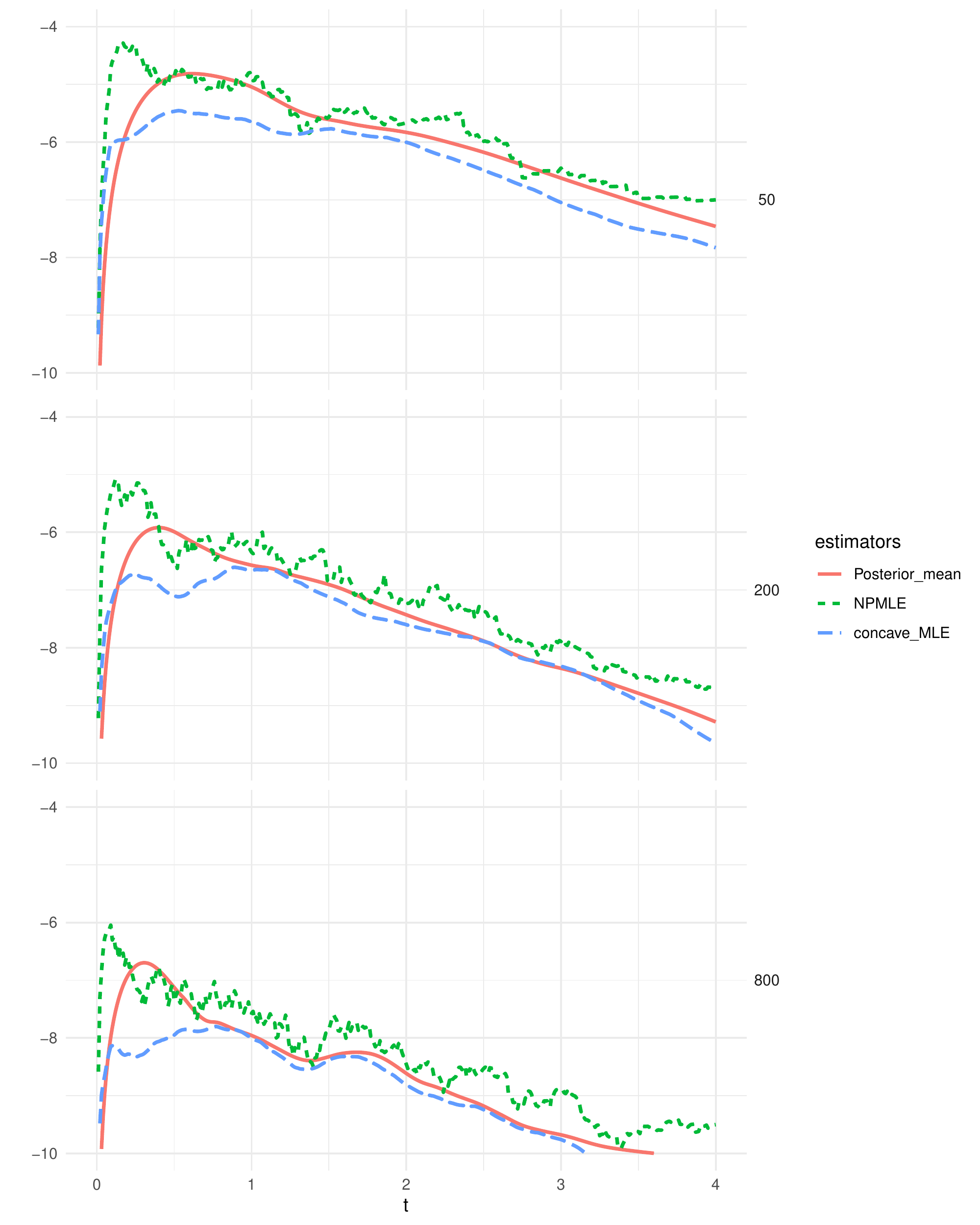}
}
\caption{The log mean square error ($\log R(\hat{F},F)$) evaluated at grid points $\{0.01,0.02,\dots,8.0\}$ for the NPMLE, the concave MLE and the posterior mean in 50 data sets of sample sizes $n=50$ (top), $n=200$ (middle) and $n=800$ (bottom). The  data generating distribution was taken to be 
the standard exponential distribution.}
\label{fig:gridMSE_exp}
\end{figure}

\begin{figure}[!htp]
\centering
{
\includegraphics[scale=0.75]{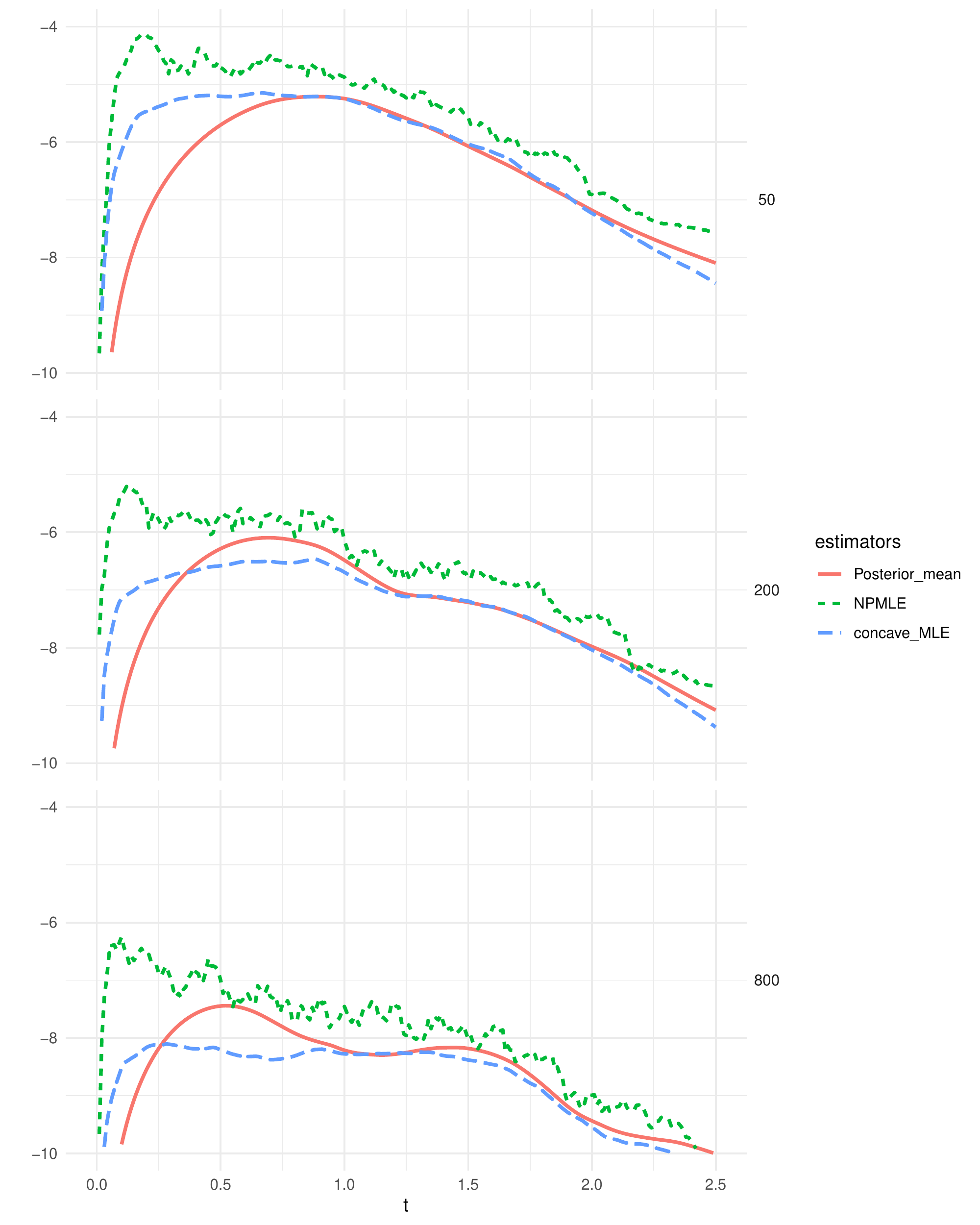}
}
\caption{The log mean square error ($\log R(\hat{F},F)$) evaluated at grid points $\{0.01,0.02,\dots,8.0\}$ for the NPMLE, the concave MLE and the posterior mean in 50 data sets of sample sizes $n=50$ (top), $n=200$ (middle) and $n=800$ (bottom). The  data generating distribution was taken to be the 
standard halfnormal distribution.}
\label{fig:gridMSE_halfnorm}
\end{figure}

We also consider a global value, the integrated square errors:
\[ISE^{(k)}(\hat{F},F)=\frac1{m}\sum_{j=1}^m(\hat{F}^{(k)}(t_j)-F(t_j))^2\]
for each sample size $n$, where $\hat{F}^{(k)}$ represent estimator based on the $k-$th data set, $k=1,\dots,50$. 
Figure \ref{fig:compareMISE} shows the mean of integrated square errors. In most of the cases, we see that the concave MLE has the smallest mean integrated square error,  The posterior mean laying between NPMLE and the concave MLE and close to the concave MLE in case of half-normal distribution.
\begin{figure}[!htp]
  \centering
  \includegraphics[scale=0.8]{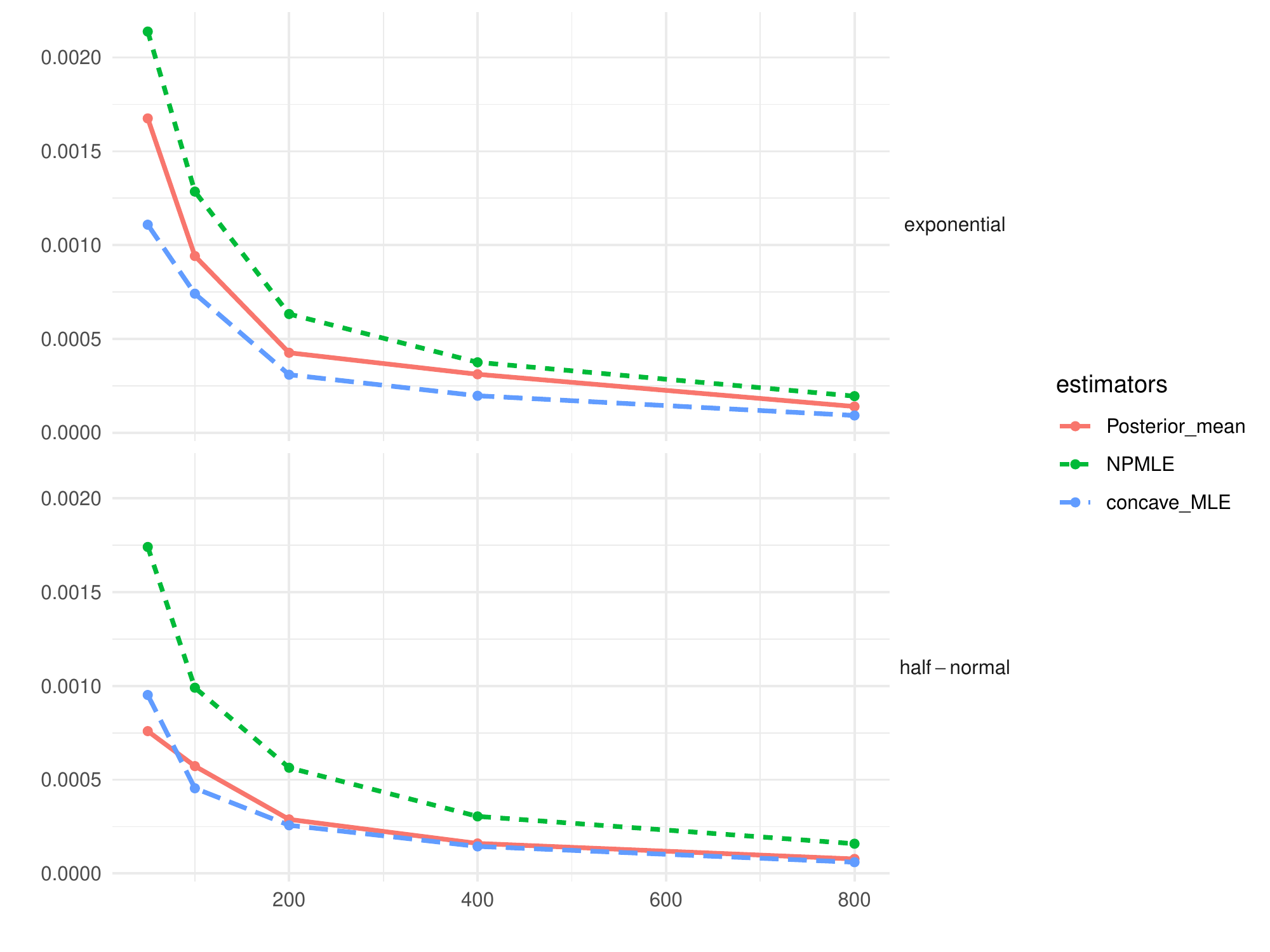}
\caption{The mean of $ISE^{(k)}(\hat{F},F)$ for the NPMLE, the concave MLE and the posterior mean in $50$ data sets of sample size $n \in \{50, 100, 200, 400, 800\}$ (corresponding to the horizontal axis).  The  data generating distribution was taken to be either 
the standard exponential distribution (top) or the standard halfnormal (bottom) distribution.} 
\label{fig:compareMISE}
\end{figure}

\section{Case study}
\label{sec:realdata}
In this section we illustrate the applicability of our method in real data examples. Using a nonparametric frequentist approach, producing confidence bands for the underlying distribution usually needs quite some fine tuning (see e.g. \cit{GroJo}). Contrary to the frequentist approach, within the Bayesian approach it is simple to construct  pointwise credible regions from MCMC output. We applied
 the Bayesian approach and two frequentist estimators to the Rubella data and Breast cancer data sets.

\begin{ex}\label{rubella}
Rubella is a highly contagious childhood disease. 
The Rubella data concerns the prevalence of rubella in $n=230$ Austrian males (see for more information \cit{Keiding(1996)}). The male individuals included in the data set represent an unvaccinated population. The data records
whether a person got infected or not before a certain time. Here  the upper limit of a persons's life span is set equal to $100$. Because there is only one inspection time per person, the data are actually case 1 interval censored.
Figure \ref{fig:vis_rubella} visualises the data, showing that the time intervals either start at $0$ or end at $100$.

 The settings for computing the posterior mean are as described in the previous section (DP as the prior, with concentration parameter $\alpha=1$ and the
 mixture of Pareto as the base measure. The  total number of iterations was set to $30.000$ where the initial  $15.000$ iterations have been treated as burn.
Figure \ref{fig:quantiles_rubella} shows the  three estimators and $95\%$ pointwise credible sets for the underlying distribution function. The mle (assuming the distribution function to be concave) is comparable with the posterior mean. However, the posterior mean provides a  smoother estimator as it is obtained by averaging and not as a maximizer of a likelihood (both the mle and mle under concavity assumption only change slope at censoring times).

\begin{figure}[!htp]
\centering
{
\includegraphics[scale=0.6]{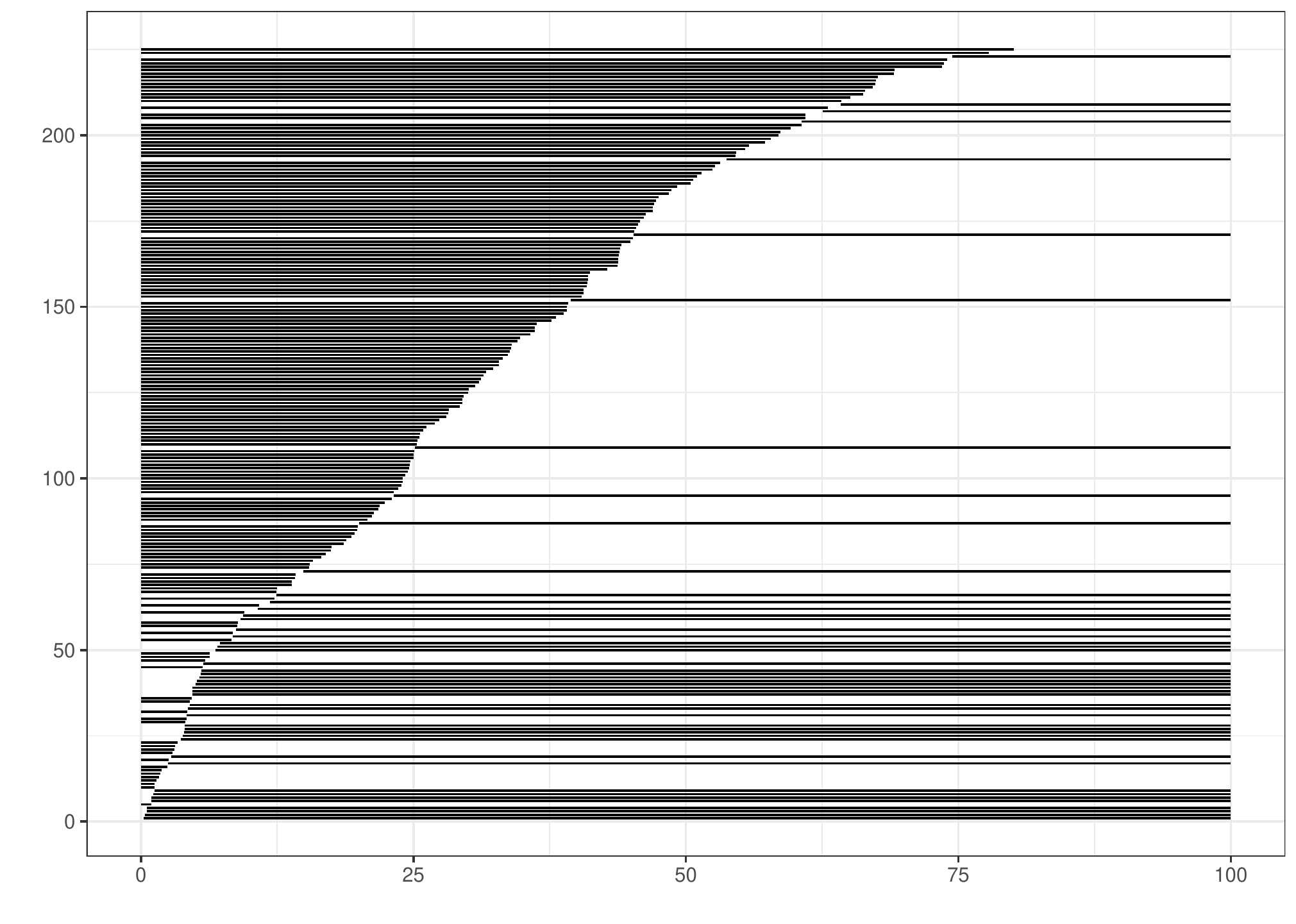}
}
\caption{Visualisation of Rubella data. The x-axis is the range of event time. The horizontal lines display the time intervals.}\label{fig:vis_rubella}
\end{figure}

\begin{figure}[!htp]
\centering
{
\includegraphics[scale=0.8]{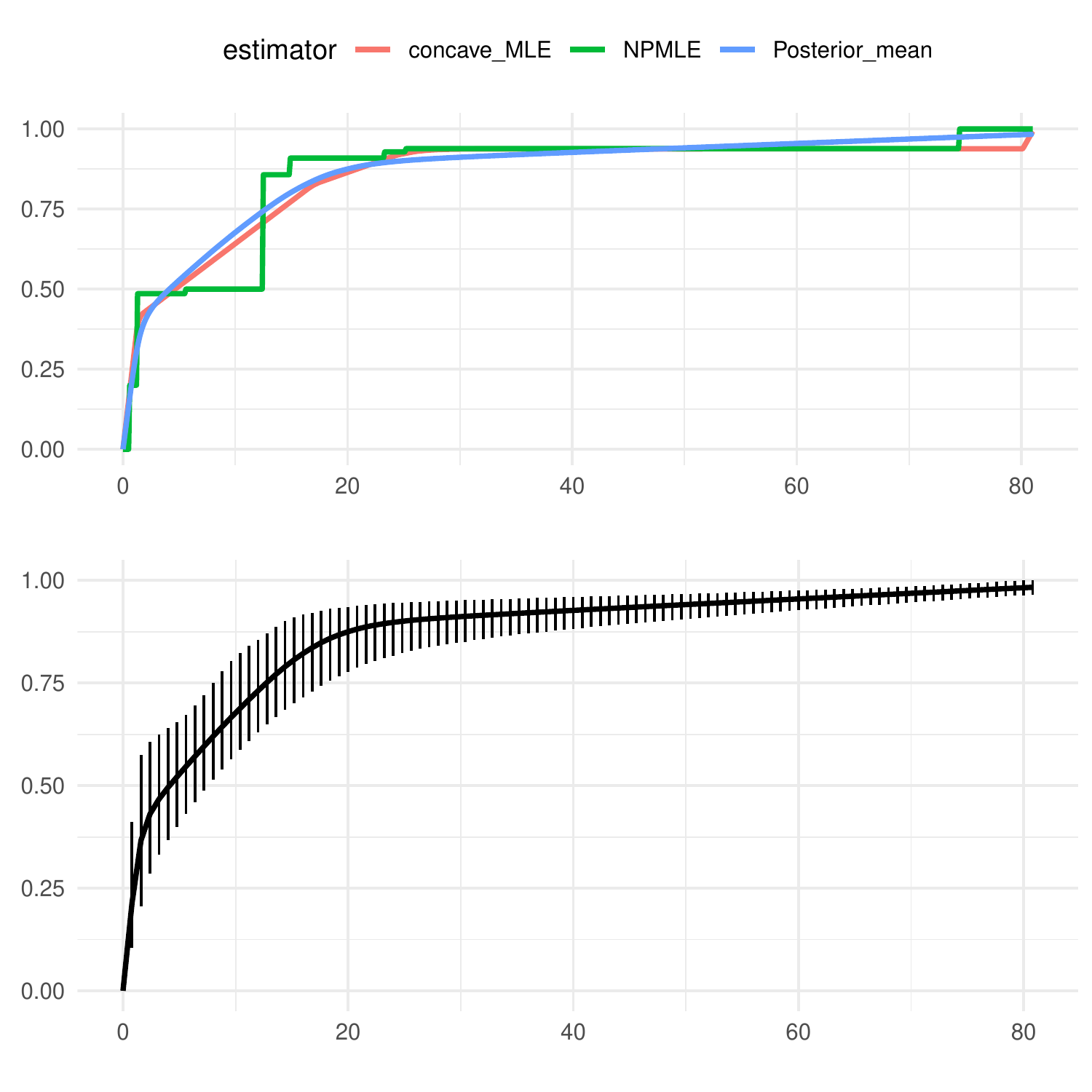}
}
\caption{Rubella data. Top: NPMLE, concave MLE and posterior mean estimators. Bottom:  $95\%$ pointwise credible sets of the estimated posterior mean for the underlying distribution function }\label{fig:quantiles_rubella}
\end{figure}
\end{ex}

\begin{ex}\label{bcos}
In the Breast cancer study discussed in \cit{FinkWol}, 94 early breast cancer patients were given radiation therapy with (RCT, 48) or without (RT, 46) adjuvant chemotherapy between 1976 and 1980. They were supposed to be seen at clinic visits every 4 to 6
months. However, actual visit times differ from patient to patient, and times
between visits also vary. In each visit, physicians evaluated the appearance breast retraction.
The data contain information about the time to breast retraction, hence, interval censored.
Figure \ref{fig:vis_bcos} visualises the data, we use the right end point $100$ for the right censoring case.
\begin{figure}[!htp]
\centering
{
\includegraphics[scale=0.35]{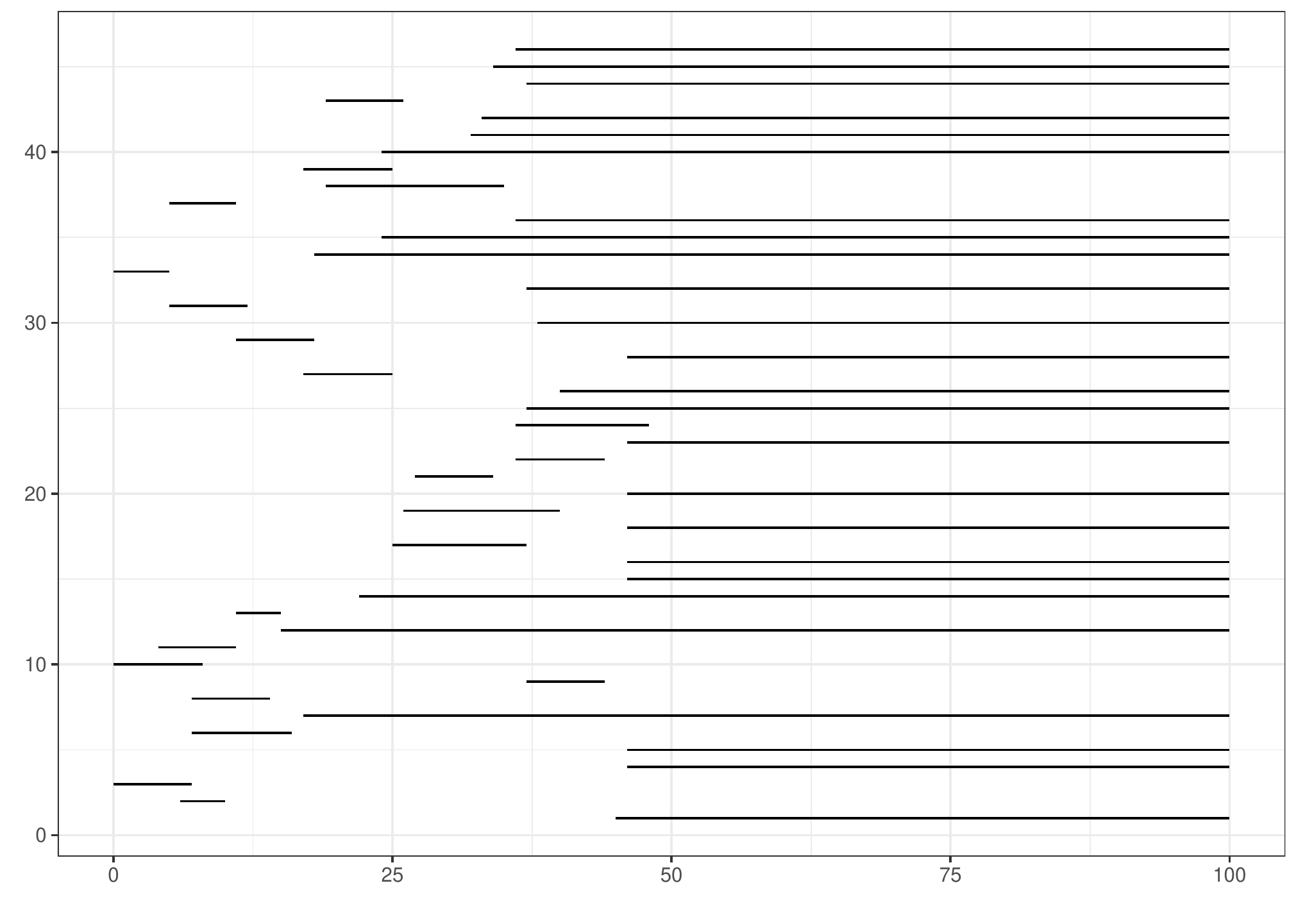}
\includegraphics[scale=0.35]{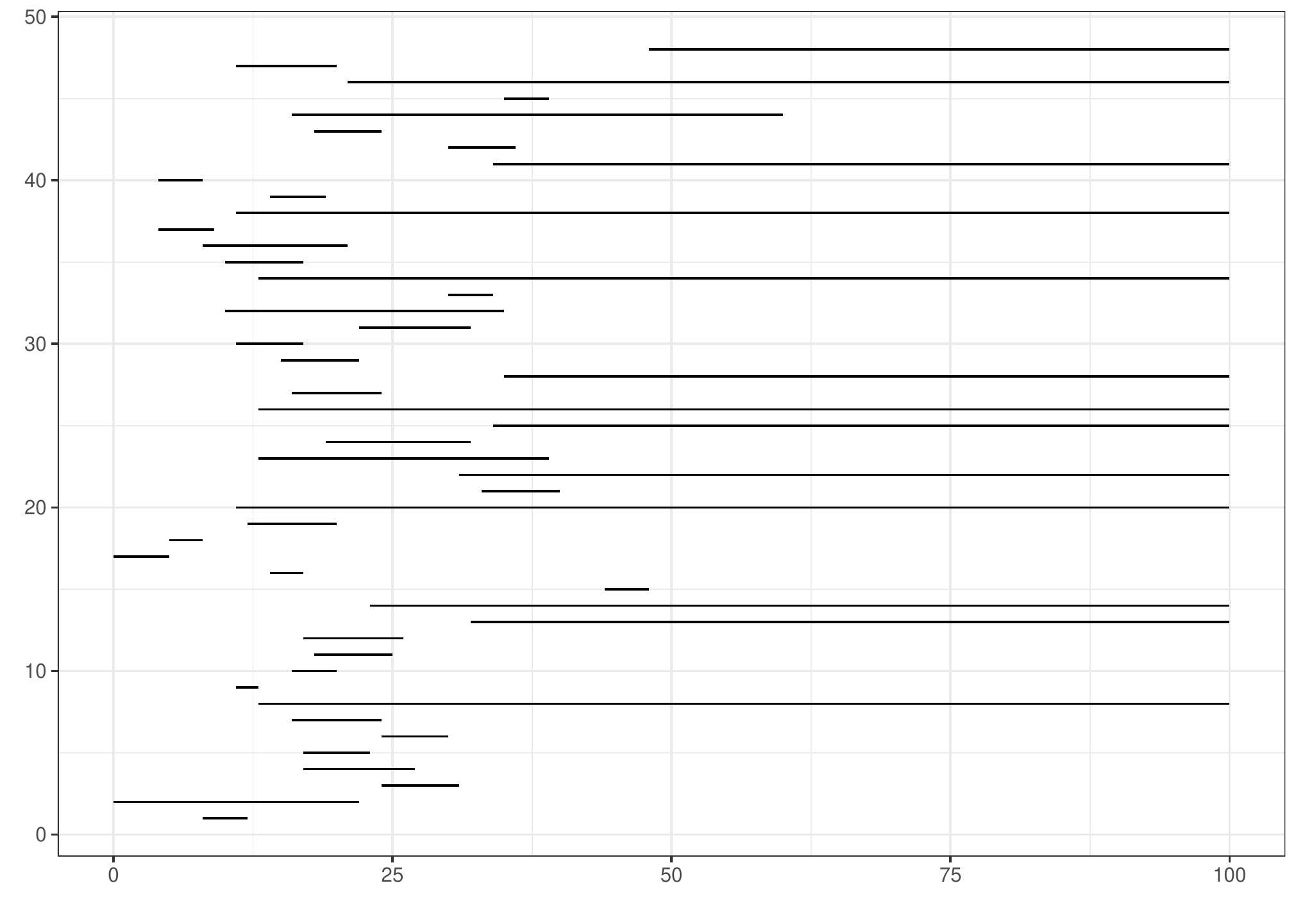}
}
\caption{Visualisation of the Breast cancer data (left: RT, right: RCT). The x-axis is the range of event times. The horizontal lines display the time intervals.}\label{fig:vis_bcos}
\end{figure}

The settings for computing the posterior mean are as in example \ref{rubella}.
Figure \ref{fig:quantiles_bcos} shows the three estimators under two treatments (RT and RCT) and $95\%$ credible sets for the underlying survival function.

\begin{figure}[!htp]
\centering
{
\includegraphics[scale=0.9]{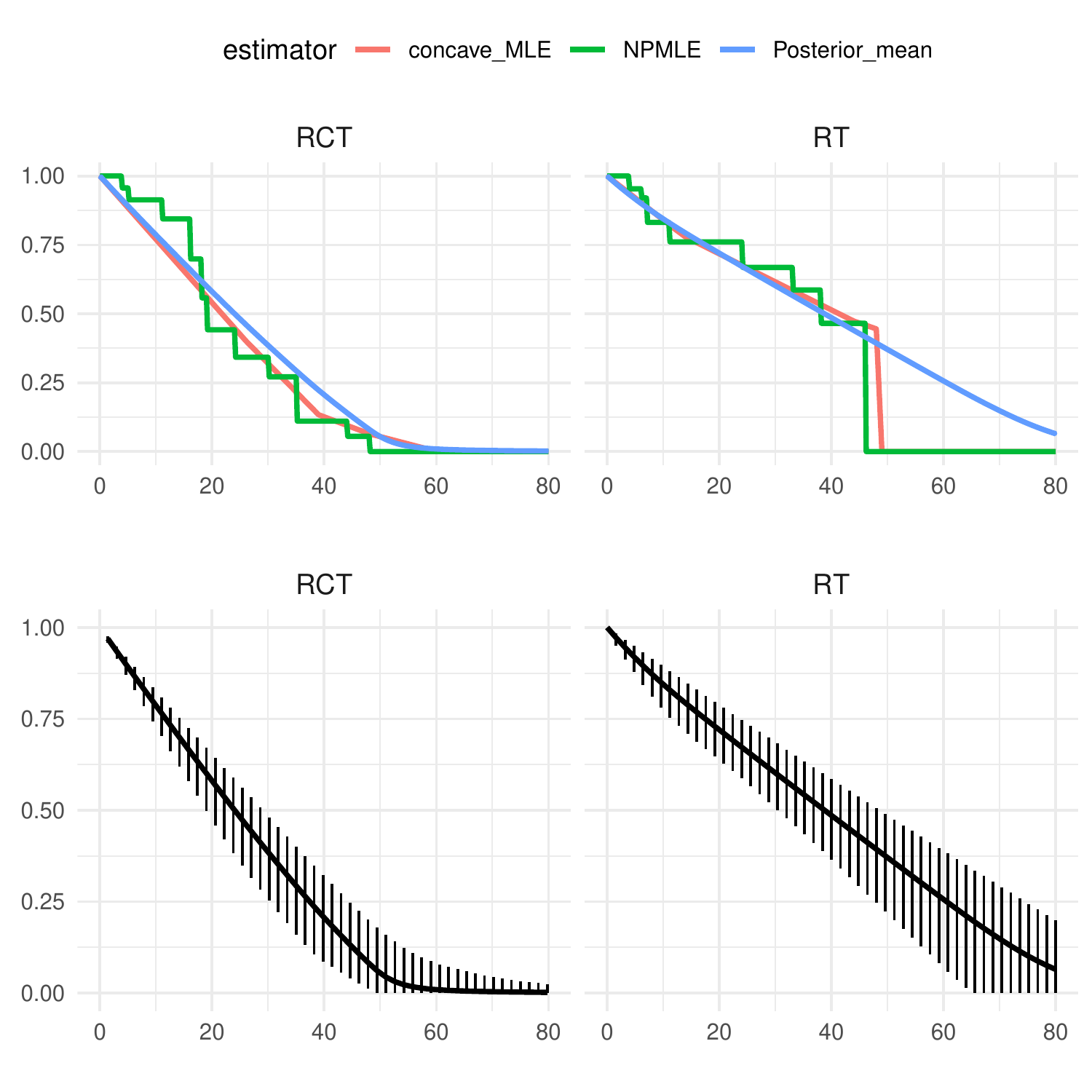}
}
\caption{Breast cancer data. Top: NPMLE, concave MLE and posterior mean estimators. Bottom:  $95\%$ pointwise credible sets of the estimated posterior mean for the underlying survival function. Left: treatment RCT. Right: treatment RT.}\label{fig:quantiles_bcos}
\end{figure}
\end{ex}

\section{Appendix: proofs of technical results}
\label{sec:app_paper2}
In the proof of lemma \ref{lem:KL}, we use the following lemma, it constructs a sequence of approximations for $F_0$.
\begin{lem}\label{lem:approximate}
Let $F_0$ satisfy the conditions stated in theorem \ref{thm:consis}. Then there exists a sequence of piecewise linear concave distribution functions $(F_m)$ such that
\[\sum_{k=1}^\infty p_K(k)\int g_k(t)h_{k,F_0,F_m}(t)dt\to 0\qquad as \quad m\to\infty.\]
\end{lem}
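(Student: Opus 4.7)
The plan is to construct the approximating sequence $F_m$ explicitly as a piecewise linear interpolation of $F_0$ perturbed by a small uniform mixture, and to bound $h_{k,F_0,F_m}(t)$ \emph{uniformly} in $(k,t)$ via the Pearson $\chi^2$ upper bound on KL-divergence. This uniform bound sidesteps any need for dominated convergence.

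For each $m\ge 1$, I would take $\tilde F_m$ to be the piecewise linear function on $[0,\infty)$ that coincides with $F_0$ on the grid $\{jL/m : 0\le j\le m\}$, is linear between grid points, and is extended past $L$ by continuing with the slope of its last linear piece until it first reaches $1$, beyond which it is constantly $1$. Because $f_0$ is non-increasing and continuous, the slopes of $\tilde F_m$ are non-increasing, so $\tilde F_m$ is a piecewise linear concave distribution function on $[0,\infty)$. I would then fix $L'>L$, put $V(x)=\Psi(x,L')$, and set
\[
F_m = (1-\eta_m)\,\tilde F_m + \eta_m\, V, \qquad \eta_m\downarrow 0,
\]
which is again piecewise linear and concave as a convex combination of such. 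The role of $\eta_m V$ is to supply a uniform density floor on $[0,L]$: $f_m\ge \eta_m/L'$, whence $q_j^{(m)}:=F_m(t_j)-F_m(t_{j-1})\ge (\eta_m/L')(t_j-t_{j-1})$ for $j\le k$ and $q_{k+1}^{(m)}\ge \eta_m(1-L/L')$ for $t_k\in(0,L]$. At the same time, uniform continuity of $f_0$ on the compact $[0,L]$ with modulus $\omega_{f_0}$ yields $\|\tilde f_m-f_0\|_{L^\infty[0,L]}\le \omega_{f_0}(L/m)$, so $\epsilon_m:=\|f_m-f_0\|_{L^\infty[0,L]}\le \omega_{f_0}(L/m)+\eta_m M \to 0$ and $|F_m(t_k)-F_0(t_k)|\le ML/m+\eta_m$ uniformly on $(0,L]$.

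Applying $\log x\le x-1$ to $x=p_j/q_j^{(m)}$ and summing gives the standard $\chi^2$ upper bound
\[
h_{k,F_0,F_m}(t) \;\le\; \sum_{j=1}^{k+1} \frac{(p_j-q_j^{(m)})^2}{q_j^{(m)}}.
\]
For $j\le k$ the numerator is at most $\epsilon_m^2(t_j-t_{j-1})^2$ and the denominator at least $(\eta_m/L')(t_j-t_{j-1})$, so summing over $j\le k$ yields at most $LL'\epsilon_m^2/\eta_m$ (since $\sum_j(t_j-t_{j-1})\le L$); the $(k+1)$-st term is at most $(ML/m+\eta_m)^2/(\eta_m(1-L/L'))$. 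Both bounds are independent of $(k,t)$. Choosing $\eta_m = \omega_{f_0}(L/m)+1/m$ makes $\omega_{f_0}(L/m)^2/\eta_m\le \omega_{f_0}(L/m)\to 0$ and $1/(m^2\eta_m)\le 1/m\to 0$, so the total bound vanishes as $m\to\infty$. Since the bound is uniform in $(k,t)$ and $\sum_k p_K(k)\int g_k(t)\,dt=1$, the integral in the lemma tends to $0$.

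The main obstacle is the simultaneous control of numerator and denominator in the $\chi^2$ upper bound---a bias/variance trade-off between the interpolation mesh $1/m$ and the cushion size $\eta_m$. A modest calculation shows that taking $\eta_m$ tending to $0$ strictly slower than the interpolation error resolves it; the only edge case, $F_0(L)=1$, is automatically handled by the uniform cushion, and no moment condition on $K$ enters this step because the bound is uniform in $k$.
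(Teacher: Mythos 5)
Your proof is correct, and it takes a genuinely different route from the paper's. The paper approximates the \emph{density} $f_0$ by a decreasing step function $f_m$ built on a grid on $[0,m]$ (after truncation and renormalization), shows $h_{k,F_0,F_m}(t)\to 0$ pointwise in $(k,t)$, bounds $|h_{k,F_0,F_m}|$ by an expression of order $\log(k+1)$, and invokes dominated convergence; it is exactly for this integrable majorant that the moment hypothesis $\EE K^r<\infty$ is used in the proof of this lemma. You instead piecewise-linearly interpolate the \emph{distribution function} $F_0$, mix in a uniform cushion $\eta_m V$ to enforce a density floor, and replace the logarithmic quantity by the $\chi^2$ upper bound $\sum_j(p_j-q_j)^2/q_j$, which you control \emph{uniformly} in $(k,t)$; the calibration $\eta_m=\omega_{f_0}(L/m)+1/m$ then makes the uniform bound vanish. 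This avoids dominated convergence entirely and shows that the moment condition on $K$ is not needed at this step (it is still used in the test-function lemma, so the theorem's hypotheses are unchanged). Two things worth making explicit if you wrote this up: the modulus-of-continuity argument uses uniform continuity of $f_0$ on $[0,L]$, which holds because $f_0$ is monotone, bounded by $M$, and continuous on $(0,\infty)$ so it extends continuously to $0$; and your $F_m$, being piecewise linear concave with compactly supported derivative, is a finite mixture of uniform distribution functions $\Psi(\cdot,\theta)$, which matters because the paper's proof of Lemma~\ref{lem:KL} explicitly relies on that mixture representation of the $F_m$ constructed here.
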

\begin{proof}
Since $F_0$ is a concave distribution function, its density $f_0$ is decreasing on $[0,\infty)$. We start off with the construction of  functions $f_m$ that approximate $f_0$ (Cf. Theorem 18 in \cit{Wu}). Choose $m\in\mathbb{N}$
and let $\tilde{f}_{0,m}=\frac{f_01_{\{[0,m]\}}}{F_0(m)}$, then $\tilde{f}_{0,m}\to f_0$ pointwise as $m\to\infty$.
Let $a_1$ and $a_2$ be real numbers such that $f_0(0)>a_1>a_2>0$. By the continuity of $f_0$, there exists $x_2>x_1$ satisfying $\tilde{f}_{0,m}(x_1)=a_1$ and $\tilde{f}_{0,m}(x_2)=a_2$. See also Figure \ref{fig:approx}.
Let $m_1\in\mathbb{N}$ and $m_2\in\mathbb{N}$ satisfy $\frac{m_1}{m}< x_1\le \frac{m_1+1}{m}$ and $\frac{m_2}{m}< x_2\le \frac{m_2+1}{m}$.
Then define
\[\tilde{f}_m(x)=\begin{cases}\tilde{f}_{0,m}(\frac{i}{m}), \qquad&\frac{i-1}{m}<x\le\frac{i}{m}, 1\le i\le m_1\\
a_1, \qquad&  \frac{m_1}{m}<x\le\frac{m_1+1}{m}\\
\tilde{f}_{0,m}(\frac{i-1}{m}), \qquad&\frac{i-1}{m}<x\le\frac{i}{m}, m_1+1<i\le m^2 .\end{cases}\]
and $\tilde{f}_m(0)=\tilde{f}_{0,m}(m^{-1})$. Because $f_0$ is continuous on $[0,m]$, $\tilde{f}_m$ converges pointwise to $f_0$  as $m\to\infty$. Note $\tilde{f}_m$ is not a probability density function, as it will not
integrate to one. We now normalize $\tilde{f}_m$ to a density function $f_m$. First we can rewrite $\tilde{f}_m$ as
$$\tilde{f}_m(x)=\sum_{i=1}^{m^2} \tilde{w}_i\phi(x,i/m),$$
where $\phi$ is defined as (\ref{phi}) and
\[\tilde{w}_i=\begin{cases}\frac{i}{m}(\tilde{f}_0(\frac{i}{m})-\tilde{f}_0(\frac{i+1}{m})), \qquad& 1\le i<m_1\\
\frac{m_1}{m}(\tilde{f}_0(\frac{m_1}{m})-a_1), \qquad&  i=m_1\\
\frac{m_1+1}{m}(a_1-\tilde{f}_0(\frac{m_1+1}{m})), \qquad&  i=m_1+1\\
\frac{i}{m}(\tilde{f}_0(\frac{i-1}{m})-\tilde{f}_0(\frac{i}{m})), \qquad& m_1+1<i< m^2\\
m\tilde{f}_0(\frac{m^2-1}{m}),\qquad& i=m^2.\end{cases}\]
 Let
$$w_i=\begin{cases}\tilde{w}_i\frac{1-\sum_{j=1}^{m_1-1}\tilde{w}_j-\sum_{j=m_2+1}^{m^2}\tilde{w}_j}{\sum_{j=m_1}^{m_2}\tilde{w}_j}, \qquad & m_1\le i\le m_2,\\
\tilde{w}_i,\qquad & \text{otherwise}.\end{cases}$$
Then $\sum_{i=1}^{m^2} w_i=1$ and $w_i\ge 0$ (for $m$ sufficiently large). Finally, define a sequence of probability density functions
\begin{equation}\label{eq:approx}f_m(x)=\sum_{i=1}^{m^2} w_i\phi(x,i/m).\end{equation}
Note that for $x\ge x_2$, $f_m(x)=\tilde{f}_m(x)$.
For each $x\in [0,m]$,
\begin{align*}
|f_m(x)-\tilde{f}_m(x)| &=\left|\sum_{i=1}^{m^2}w_i\phi(x,\frac{i}{m})-\sum_{i=1}^{m^2}\tilde{w}_i\phi(x,i/m)\right|\\
&=\left|\sum_{i=m_1}^{m_2}(w_i-\tilde{w}_i)\phi(x,i/m)\right|\\
&=\left|\left(\frac{1-\sum_{j=2}^{m_1-1}\tilde{w}_j-\sum_{j=m_2+1}^{m^2}\tilde{w}_j}{\sum_{j=m_1}^{m_2}\tilde{w}_j}-1\right)\sum_{i=m_1}^{m_2}\tilde{w}_i\phi(x,i/m)\right|\\
&\le \left|\frac{1-\sum_{j=2}^{m_1-1}\tilde{w}_j-\sum_{j=m_2+1}^{m^2}\tilde{w}_j}{\sum_{j=m_1}^{m_2}\tilde{w}_j}-1\right|\left(\sum_{i=m_1}^{m_2}\tilde{w}_i\right)\frac{m}{m_1}\\
&=\left|1-\sum_{i=2}^{m_1-1}\tilde{w}_i-\sum_{i=m_2+1}^{m^2}\tilde{w}_i-\sum_{i=m_1}^{m_2}\tilde{w}_i\right|\frac{m}{m_1}\\
&=\Big|1-\frac1{m}\sum_{i=2}^{m^2} \tilde{f}_{0,m}(i/m)-\frac{a_1}{m}\Big|\frac{m}{m_1} \to 0
\end{align*}
Here we use that $m/m_1\to x_1^{-1}$ and that the expression within the modular signs converges to 0 as difference between $\int_0^m\tilde{f}_{0,m}(x)dx$ and its Riemann sum approximate. Then we have $|f_m-\tilde{f}_{0,m}|\to0$  pointwise
and $\tilde{f}_{0,m}\to f_0$ pointwise. Hence $f_m$ is a decreasing density and converges to $f_0$ pointwise. See an example in figure \ref{fig:approx} for visualize $f_0,\tilde{f}_m$ and $f_m$.
\begin{figure}[!htp]
\centering
\includegraphics[width=12cm,height=8cm]{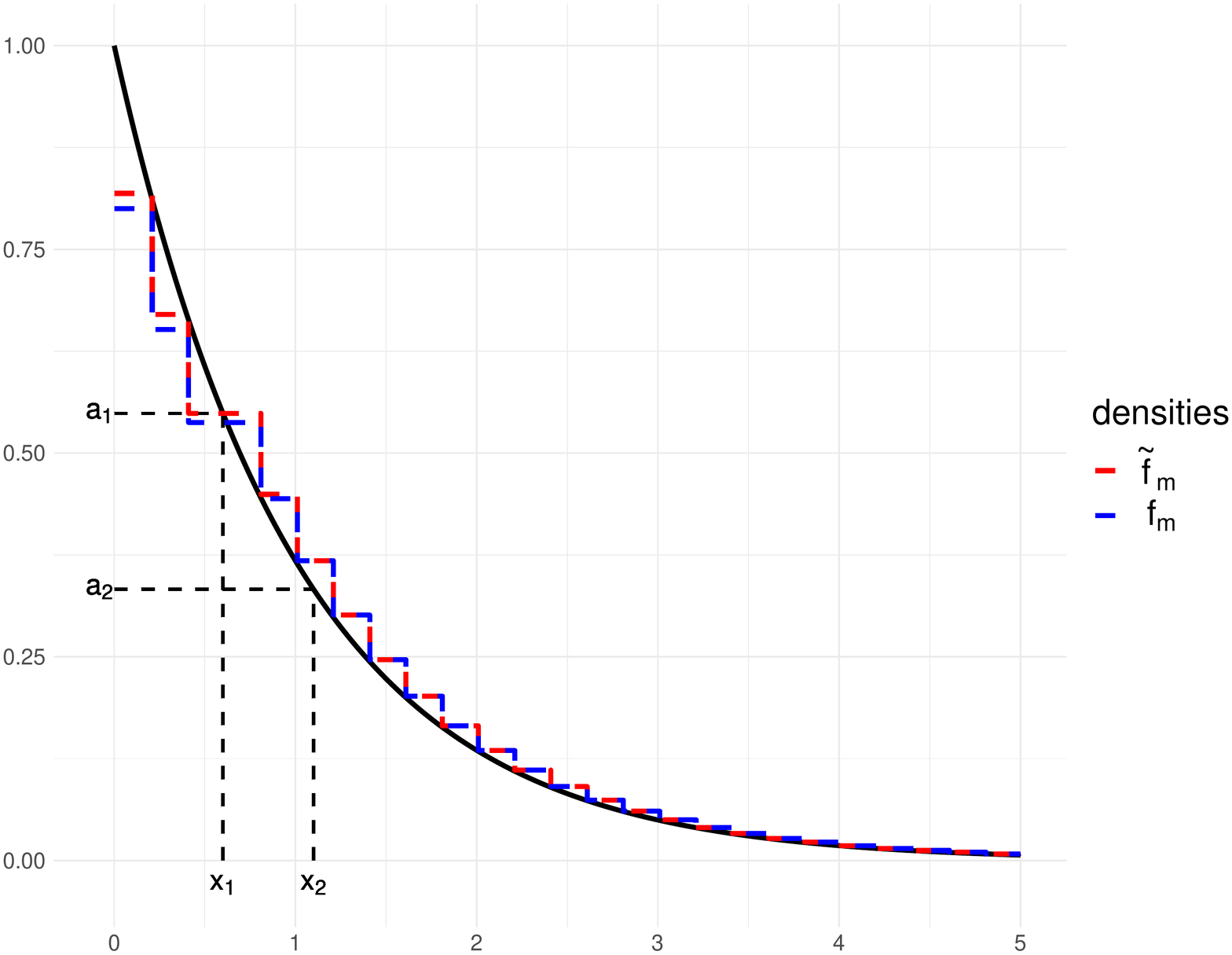}
\caption{Approximation for a decreasing function $f_0$. First we construct a step function $\tilde{f}_m$, then we normalize the weights $\tilde{w}_i$ to $w_i$ such that $f_m$ defined by (\ref{eq:approx}) is a decreasing density function.}
\label{fig:approx}
\end{figure}

Define $F_m(x)=\int_0^xf_m(t)dt$, then using dominated convergence, we have $F_m\to F_0$ pointwise. As $m\to\infty$ ($m>L$), then it follows that for all $k$ and $t$
$$h_{k,F_0,F_m}(t)=\sum_{j=1}^{k+1}(F_0(t_j)-F_0(t_{j-1}))\log\frac{F_0(t_j)-F_0(t_{j-1})}{F_m(t_j)-F_m(t_{j-1})}\to 0.$$
The next step is to find an integrable upper bound for $|h_{k,F_0,F_m}|$. Denote $p_j=F_0(t_{j})-F_0(t_{j-1})$ for $j=1,\dots,k+1$ and note that $\sum_{j=1}^{k+1}p_j=1$.
Then
\[| h_{k,F_0,F_m}(t)|\le\sum_{j=1}^{k+1}p_j|\log p_j|+\sum_{j=1}^{k+1}p_j|\log(F_m(t_{j})-F_m(t_{j-1}))|.\]
Using Lagrange multipliers, the first sum achieves its maximal value over all probability vectors when all $p_j$'s would be equal. Hence it can be bounded by $\log(k+1)$. For the second sum,
by the construction of $f_m$ we know that when $x<x_2$, $f_m(x)\ge f_m(x_2)=\tilde{f}_m(x_2)=\tilde{f}_{0,m}(m_2/m)\ge a_2$; when $ x_2\le x\le m$, $f_m(x)=\tilde{f}_m(x)\ge \tilde{f}_{0,m}(x)=\frac{f_0(x)}{F_0(m)}\ge f_0(x)$.
Since there exists $j_0\in\{1,\dots,k+1\}$ such that $t_{j_0-1}<x_2\le t_{j_0}$, the second sum can be bounded by $I_1+I_2+I_3$, where
\begin{align*}
I_1&=-\sum_{j=1}^{j_0-1}p_j\log(a_2(t_j-t_{j-1}))\\
I_2&=-p_{j_0}\log(a_2(x_2-t_{j_0-1})+F_0(t_{j_0}\wedge m)-F_0(x_2))\\
I_3&=-\sum_{j=j_0+1}^{k}p_j\log p_j+p_{k+1}|\log(F_0(m)-F_0(t_k))|
\end{align*}
Again using the Lagrange multipliers, we have
\begin{align*}
I_1&=-\sum_{j=1}^{j_0-1}\frac{p_j}{a_2(t_j-t_{j-1})}(a_2(t_j-t_{j-1}))\log(a_2(t_j-t_{j-1}))\\
&\le -\frac{M}{a_2}\sum_{j=1}^{j_0-1}(a_2(t_j-t_{j-1}))\log(a_2(t_j-t_{j-1}))
\le \frac{M}{a_2}\log k
\end{align*}
In the second step we use $p_j\le M(t_j-t_{j-1})$. In the final step, we use that $\sum_{j=1}^{j_0-1}a_2(t_j-t_{j-1})\le 1$. To bound $I_2$, we know that $-x\log x\le \frac1{e}$ when $x\in(0,1]$. Splitting $I_2$ into two parts, we have
\begin{align*}
I_2&\le-(F_0(t_{j_0})-F_0(x_2))\log(F_0(t_{j_0}\wedge m)-F_0(x_2))-(F_0(x_2)-F_0(t_{j_0-1}))\log(a_2(x_2-t_{j_0-1}))\\
&\le \frac1{e}\left(\frac{F_0(t_{j_0})-F_0(x_2)}{F_0(t_{j_0}\wedge m)-F_0(x_2)}+\frac{F_0(x_2)-F_0(t_{j_0-1})}{a_2(x_2-t_{j_0-1})}\right)\\
&\le \frac1{e}\left(\frac{F_0(t_{j_0})}{F_0(t_{j_0}\wedge m)}+\frac{M}{a_2}\right)
\le \frac1{e}\left(\frac1{F_0(L)}+\frac{M}{a_2}\right)\\
\end{align*}
In the last step, we used that  $\frac{F_0(t_{j_0})}{F_0(t_{j_0}\wedge m))}\le\max(1,1/F_0(m))\le 1/F_0(L)$. Similarly, we can bound $I_3$ by
\begin{align*}
I_3&\le\log k+p_{k+1}|\log(F_0(m)-F_0(t_k))|\\
&\le \log k+\frac1{e}\frac{1-F_0(t_k)}{F_0(m)-F_0(t_k)}
\le \log k+\frac1{e}\frac1{F_0(m)}\le \log k+\frac1{e}\frac1{F_0(L)}
\end{align*}
Therefore, having these bounds we obtain
$$\mid h_{k,F_0,F_m}(t)\mid\le \left(\frac{M}{a_2}+2\right)\log (k+1)+\frac1{e}\left(\frac{2}{F_0(L)}+\frac{M}{a_2}\right).$$
By the assumption in theorem \ref{thm:consis}, we have $\EE\log(K+1)\le C(r)K^r\infty<\infty$ for some constant $C(r)$ depend on $r$, hence
 $$\sum_{k=1}^\infty p_K(k)\int g_k(t)\mid h_{k,F_0,F_m}(t)\mid dt< \infty.$$
Therefore, by the dominated convergence theorem,
$$\sum_{k=1}^\infty p_K(k)\int g_k(t)h_{k,F_0,F_m}(t)dt\to 0.$$
\end{proof}

\subsection{Proof of lemma \ref{lem:KL}}
\begin{proof} By lemma \ref{lem:approximate}, for any $\eta>0$ there exists a sequence of piecewise linear concave distribution functions $(F_m)$ such that
\begin{equation}\label{F0m}\sum_{k=1}^\infty p_K(k)\int g_k(t)h_{k,F_0,F_m}(t)dt<\eta/2\end{equation}
for all $m$ big enough. Recall definition (\ref{eq:approx}), $f_m(x)=\sum_{i=1}^{m^2} w_i\phi(x,\frac{i}{m})=\int \phi(x,\theta)dP_m(\theta)$, where $P_m(\cdot)=\sum_{i=1}^{m^2}w_i\delta_{i/m}(\cdot)$. Without loss of generality,
assume $w_i>0$ for all $i=1,\dots,m^2$.
Given $m$ fixed, for some $0<\epsilon<\min(1,e^{\eta/4}-1)$, define a discrete probability measure $P'_{m,\epsilon}(\cdot)=\sum_{i=1}^{m^2}w_i\delta_{(i+\epsilon/2)/m}(\cdot)$. Moreover, define
the bounded Lipschitz distance on the set of probability measure on $[0,\infty)$ by
\[d_{BL}(P,Q)=\sup_{\psi\in \mathcal{C}_1}\left|\int\psi dP-\int\psi dQ\right|,\]
where $\mathcal{C}_1$ denotes the set of Lipschitz continuous functions on $[0,\infty)$ with Lipschitz constant 1. Then $d_{BL}$ induces the weak topology (See Appendix A.2 in \cit{GhoVaart}).
Choose $0<\delta\le\frac{\epsilon}{4m}(1-e^{-\eta/4})\min_{1\le i\le m^2}w_i$ and define the open set
$$\Omega_m=\left\{P\in\mathcal{M}: d_{BL}(P,P'_{m,\epsilon})<\delta\right\}.$$
 Choose Lipschitz continuous functions $\psi_j, j=1,\dots,m$ with compact support $[\frac{j}{m},\frac{j+\epsilon}{m}]$, satisfying $\psi_j(\theta)=\frac{\epsilon}{4m}$ if
 $\theta\in(\frac{j+\frac1{4}\epsilon}{m},\frac{j+\frac{3}{4}\epsilon}{m})$ and $0\le \psi_j\le\frac{\epsilon}{4m}$.
Denote $U_j=[\frac{j}{m},\frac{j+\epsilon}{m}]$, $j=1,\dots,m^2$. Then for any $P\in\Omega_m$, $j=1,\dots,m^2$, we have
\[\left|\int \psi_jdP-\int \psi_jdP'_{m,\epsilon}\right|\le d_{BL}(P,P'_{m,\epsilon})<\delta.\]
It also follows that for $j=1,\dots,m^2$,
\begin{align*}\frac{\epsilon}{4m}P(U_j)&\ge \int\psi_jdP\ge \int\psi_jdP'_{m,\epsilon}-\delta\\
&\ge\frac{\epsilon}{4m}\int_{(j+\frac1{4}\epsilon)/m}^{(j+\frac{3}{4}\epsilon)/m}1dP'_{m,\epsilon}-\delta
=\frac{\epsilon}{4m}w_j-\delta\ge \frac{\epsilon}{4m}e^{-\eta/4}w_j.\end{align*}
That is $P(U_j)\ge e^{-\eta/4}w_j$, for $j=1,\dots,m^2$. Using this lower bound and the mixture representation (\ref{eq:mixpre}), we have for any $x\ge 0$, $P\in\Omega_m$,
\[\frac{f_m(x)}{f_P(x)}\le \frac{\sum_{i=1}^{m^2} w_i\phi(x,\frac{i}{m})}{\sum_{i=1}^{m^2} \int_{U_i}\phi(x,\theta)dP(\theta)}
\le \frac{\sum_{i=1}^{m^2} w_i\frac{m}{i}1_{\{x\le \frac{i}{m}\}}}{\sum_{i=1}^{m^2} \frac{m}{i+\epsilon}1_{\{x\le \frac{i}{m}\}}P(U_j)}
\le (1+\epsilon)e^{\eta/4}\le e^{\eta/2}.
\]
As this implies \[F_m(t_j)-F_m(t_{j-1})=\int_{t_{j-1}}^{t_j}f_m(x)dx\le e^{\eta/2}\int_{t_{j-1}}^{t_j}f_P(x)dx=e^{\eta/2}(F_P(t_j)-F_P(t_{j-1})),\] we have that
\begin{align}h_{k,F_m,F_P}(t)&=\sum_{j=1}^{k+1}(F_0(t_j)-F_0(t_{j-1}))\log\frac{F_m(t_j)-F_m(t_{j-1})}{F_P(t_j)-F_P(t_{j-1})}\nonumber\\
&\le \frac{\eta}{2}\sum_{j=1}^{k+1}(F_0(t_j)-F_0(t_{j-1}))
\le \eta/2.\label{FmP}\end{align}
Note that $h_{k,F_0,F_P}(t)=h_{k,F_0,F_m}(t)+h_{k,F_m,F_P}(t)$. Combining inequalities (\ref{F0m}) and (\ref{FmP}), we have
 $$\sum_{k=1}^\infty p_K(k)\int g_k(t)h_{k,F_0,F_P}(t)dt<\eta.$$
That means $\{F_P\in\mathcal{F}:P\in\Omega_m\}\subset S(\eta)$. 
Since $\Omega_m$ is an open weak neighborhood of $P'_m$ in the neighborhood a and support$(\Pi^\ast)=\mathcal{M}$, we have $\Pi^\ast(\Omega_m)>0$.

Recall that the prior $\Pi$ on $\mathcal{F}$ is induced by the prior $\Pi^\ast$ on $\mathcal{M}$ and the mixture representation (\ref{eq:mixpre}),
therefore $\Pi(S(\eta))\ge\Pi^\ast(\Omega_m)>0$.

\end{proof}

\subsection{Proof of lemma \ref{lem:test}}
\begin{proof}
We construct a test function depending on  data $\mathcal{D}_n$. For any $\epsilon>0$, define the event $A_n=\{d_n(\hat{F}_n,F_0)\ge \epsilon/2\}$,
where $\hat{F}_n$ is the maximum likelihood estimator of the underlying distribution based on observations $\mathcal{D}_n$ (see Theorem 3 in \cit{DumJong06}) and $d_n$ is defined as (\ref{lossfunc}).
Define $\Phi_n=1\{A_n\}$, then as $n\to\infty$,
\begin{align}
\EE_0\Phi_n&=\EE_{K,T}\{\EE_{F_0}[\Phi_n|K,T]\}\nonumber\\
&=\EE_{K,T}\{\PP_{F_0}[d_n(\hat{F}_n,F_0)\ge \epsilon/2|K,T]\}\to0
\label{firsttype}
\end{align}
The final step holds because the consistency of $\hat{F}_n$, $\PP_{F_0}[d_n(\hat{F}_n,F_0)\ge \epsilon/2|K,T]\to 0$ and this probability is bounded by 1.
Similarly, given $(K,T)$, for all $F\in U_\epsilon$
\begin{align*}
\EE_F[1-\Phi_n|(K,T)]
&=\PP_F[\{d_n(\hat{F}_n,F_0)\le\epsilon/2\}\cap \{d_n(F,F_0)> \epsilon\}|(K,T)]\\
&\le\PP_F[d_n(F_0,F)-d_n(\hat{F}_n,F_0)\ge\epsilon/2|(K,T)]\\
&\le\PP_F[d_n(F,\hat{F}_n)\ge\epsilon/2|(K,T)]
\end{align*}
Then it is sufficient to prove for any $\epsilon>0$,
\[\EE_{(K,T)}\left\{\sup_{F\in U_\epsilon}\PP_F[d_n(F,\hat{F}_n)>\epsilon|(K,T)]\right\}\to 0.\]
We state that
\begin{equation}\label{secondtype}\sup_{F\in U_\epsilon}\PP_F[d_n(F,\hat{F}_n)>\epsilon|(K,T)]\to 0.\end{equation}
Then (\ref{firsttype}) and (\ref{secondtype}) are equivalent to the existence of a uniformly exponentially consistent test for testing $H_0:F=F_0$ versus $H_1:F\in U_{\epsilon}$ (see Proposition 4.4.1 in \cit{GhoRam}).

Now we show the inequality (\ref{secondtype}) holds. For a fixed $F\in\mathcal{F}$, the consistency result in \cit{DumJong06} claims that $d_n(F,\hat{F}_n)\to_{p}0$, 
Actually, they proved that $\PP_F[d_n(F,\hat{F}_n)>\epsilon]\to 0$ given the censoring times $(K,T)$. We checking all steps of the proof in \cit{DumJong06}, the consistency is follows from the finite expectation of $K$ and the bound $F\le 1$.
Define
$$H^2(F,G)=(2n)^{-1}\sum_{i,j}(F_{i,j}-G_{i,j})^2.$$
The consistency result is follows from the following steps:
\begin{enumerate}
\item $d_n(F,\hat{F}_n)\le 8^{1/2}H(F,\hat{F}_n)$ ;
\item $H(F,\hat{F}_n)^2\le n^{-1}\sum_{i,j} (\Delta_{i,j}-F_{i,j})(\hat{F}_{n,i,j}/F_{i,j})^{1/2}$;
\item $n^{-1}\sum_{i,j} (\Delta_{i,j}-F_{i,j})(\hat{F}_{n,i,j}/F_{i,j})^{1/2}\le \sup_{G\in\mathcal{F}}|\sum_i (\psi_i(G)-\EE_F \psi_i(G))|$;
\end{enumerate}
where 
$\psi_i(G)=n^{-1}\sum_j\Delta_{i,j}(G_{i,j}/F_{i,j})^{1/2}$. Hence, it is sufficient to show
\[\PP_F\left\{\sup_{G\in\mathcal{F}}\Big|\sum_i (\psi_i(G)-\EE_F \psi_i(G))\Big|>\epsilon\right\}\to 0.\]
By theorem \ref{thm:Pollard}, this is a consequence of the following conditions: for some sequences $\delta_n\to0, b_n\to 0$,
\begin{align}
&\EE_F\sum_{i=1}^n\sup_{G\in\mathcal{F}}|\psi_i(G)|=O(1),\label{B1}\\
&\EE_F\sum_{i=1}^n \mathbf{1}\{\sup_{G\in\mathcal{F}}|\psi_i(G)|>\delta_n\}\sup_{G\in\mathcal{F}}|\psi_i(G)|=b_n,\label{B2}\\
&\text{for any}\, u>0,\quad \log\mathcal{N}(u,\mathcal{F},\rho_n)\le c(u).\label{B3}
\end{align}
where
\[\mathcal{N}(u,\mathcal{F},\rho_n)=\min\left\{\#\mathcal{G}: \mathcal{G}\subset\mathcal{F},\inf_{G'\in\mathcal{G}}\rho_n(G,G')\le u\,\text{for all}\, G\in\mathcal{F}\right\},\]
and
\[\rho_n(G,G')=\sum_{i=1}^n|\psi_i(G)-\psi_i(G')|.\]
We first give the main inequalities to derive these conditions.
For (\ref{B1}), \[\EE_F\sum_{i=1}^n\sup_{G\in\mathcal{F}}|\psi_i(G)|\le n^{-1}\sum_i (K_i+1)^{1/2}.\]
For (\ref{B2}), \[\EE_F\sum_{i=1}^n1\{\sup_{G\in\mathcal{F}}|\psi_i(G)|>\delta_n\}\sup_{G\in\mathcal{F}}|\psi_i(G)|\le n^{-1}\sum_i(K_i+1)^r(n\delta_n)^{-2\kappa}\to 0,\]
where $\kappa\in(0,\frac1{2})$, recall that $EK^r<\infty$ and choosing $n\delta_n\to\infty$.
As for (\ref{B3}), $\rho_n$ can be bounded by a finite measure, hence \[\log\mathcal{N}(u,\mathcal{F},\rho_n)\le Cu^{-1}\] for some constant $C$. (For more details see the proof of Theorem 3 in \cit{DumJong06}). Hence,
\[b_n=n^{-1}\sum_i(K_i+1)^r(n\delta_n)^{-2\kappa}, c(u)=Cu^{-1}.\]
By equation (\ref{eq:upbound}), we have
\[\PP_F\left\{\sup_{G\in\mathcal{F}}\Big|\sum_i (\psi_i(G)-\EE_F \psi_i(G))\Big|>\epsilon\right\}\le4\epsilon^{-1}b_n+ 128C\epsilon^{-1}\epsilon^{-1}\exp\left(-\frac{\epsilon^2}{512n\delta_n^2}\right)\]
Note that the right side do not depend on $F$, hence the inequality (\ref{secondtype}) holds.

\end{proof}

\subsection{A technical result for proving uniform convergence}
The following theorem follows from theorem 8.2 in \cit{Pollard}.
\begin{thm}\label{thm:Pollard}
Let $f_1(w,t),f_2(w,t),\dots,f_n(w,t)$ be independent processes with integrable envelopes $F_1(w)$, $F_2(w),\dots, F_n(w)$. If for each $\epsilon>0$,
\begin{enumerate}
\item there is a sequence $\delta_n\to 0$ such that
\[ \frac1{n}\sum_{i=1}^n\EE F_i\mathbf{1}\{F_i>\delta_n\}<\epsilon,\quad\mbox{for all}\quad n,\]
\item $\log N(u,\mathcal{F}_{nw},\rho_n)=c(u)$,
\end{enumerate}
then \[\sup_t\left|\sum_{i=1}^n(f_i(w,t)-\EE f_i(w,t))\right|\to 0 \quad\mbox{in probability.}\]
Here $\mathcal{N}(u,\mathcal{F}_{nw},\rho_n)$ is the covering number of $\mathcal{F}_{nw}$ with distance
\[\rho_n=\rho_n(t,t')=\sum_{i=1}^n|f_i(w,t)-f_i(w,t')|.\]
\end{thm}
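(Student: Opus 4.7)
The plan is to recognise this statement as essentially Theorem 8.2 of \cit{Pollard} rewritten in the notation of our paper, and to invoke that result directly. The two hypotheses are exactly the ``envelope uniform integrability'' and ``manageability via covering numbers'' conditions that Pollard requires; the conclusion is the standard uniform law of large numbers for empirical processes in his framework, adapted to triangular arrays of independent (but not necessarily identically distributed) processes.

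Concretely, I would organise the proof in three steps that parallel Pollard's argument. First, a symmetrization step: introducing an independent copy $(f_i')$ of the processes along with i.i.d.\ Rademacher signs $\sigma_i$ independent of everything else, one obtains, for any $\epsilon>0$,
\[
\PP\Bigl\{\sup_t\Bigl|\sum_{i=1}^n (f_i(w,t)-\EE f_i(w,t))\Bigr|>\epsilon\Bigr\}\le 4\,\PP\Bigl\{\sup_t\Bigl|\sum_{i=1}^n \sigma_i f_i(w,t)\Bigr|>\epsilon/4\Bigr\},
\]
which is legitimate because the envelopes are integrable and the centered partial sums have finite variance. Second, a truncation step: write $f_i=f_i\ind\{F_i\le\delta_n\}+f_i\ind\{F_i>\delta_n\}$. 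The ``large'' part is controlled directly by hypothesis (1) via $\EE|\sum_i f_i\ind\{F_i>\delta_n\}|\le \sum_i \EE F_i\ind\{F_i>\delta_n\}<n\epsilon$, which after normalisation contributes an arbitrarily small amount. Third, a chaining step for the ``small'' part, which is pointwise bounded by $\delta_n$: combining a Hoeffding-type exponential inequality for sums of bounded symmetric variables with a chain in $\rho_n$-balls of decreasing radius $u$ yields a bound of the form
\[
\PP\Bigl\{\sup_t\Bigl|\sum_{i=1}^n \sigma_i f_i(w,t)\ind\{F_i\le\delta_n\}\Bigr|>\epsilon/8\Bigr\}\le \exp(c(u))\cdot \exp\!\Bigl(-\frac{C\epsilon^2}{n\delta_n^2}\Bigr),
\]
in which the covering number condition (2) keeps $c(u)$ finite at every resolution $u>0$.

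The main obstacle is balancing the three rates---the truncation level $\delta_n\to 0$, the chaining resolution $u$, and the covering-number growth $c(u)$---so that the exponential factor dominates as $n\to\infty$. This requires $\delta_n$ to tend to zero slowly enough that $n\delta_n^2$ does not grow too quickly (in the applications above, $\delta_n=1/\sqrt{n}$ and variants suffice), yet fast enough for condition (1) to give a uniform bound. Once these rates are aligned, the exponential bound vanishes for each fixed $u$, and a standard diagonal argument together with letting $\epsilon\downarrow 0$ delivers convergence in probability of the supremum. Since the proof ultimately only reorganises the corresponding argument in \cit{Pollard}, no novel ingredient is needed; the contribution is to certify that the precise statement used in Lemma \ref{lem:test} and Theorem \ref{corr:L1mu} is a faithful specialisation of Pollard's result.
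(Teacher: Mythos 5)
Your proposal follows essentially the same route as the paper's proof: split each process according to whether the envelope exceeds a cutoff $\delta_n$, control the ``large'' part via Markov's inequality and condition~(1), symmetrize the ``small'' part, pass from the supremum over $t$ to a maximum over a finite $u$-net using condition~(2), and apply Hoeffding's inequality to the resulting Rademacher sums. The only organizational difference is that you symmetrize before truncating whereas the paper truncates before symmetrizing; this is immaterial, and both arguments are single-scale covering arguments (not genuine multi-resolution chaining), so the word ``chaining'' in your outline is being used loosely but harmlessly.

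One imprecision worth flagging: the direction of the rate constraint on $\delta_n$ in your closing paragraph is reversed. The Hoeffding step produces a term of the form $\exp\bigl(-C\epsilon^2/(n\delta_n^2)\bigr)$, which vanishes only if $n\delta_n^2\to 0$; hence $\delta_n$ must decay \emph{strictly faster} than $n^{-1/2}$, not ``slowly enough,'' and your suggested example $\delta_n=n^{-1/2}$ makes $n\delta_n^2\equiv 1$, so that exponential does not shrink at all. (The applications in Lemma~\ref{lem:test} and Theorem~\ref{corr:L1mu} survive this because there the envelopes themselves are $O(n^{-1})$; the effective boundedness scale that enters Hoeffding is then $n^{-1}$ rather than $\delta_n$, and the exponent behaves like $-C\epsilon^2 n$, which is what actually forces the bound to zero.) With that correction your outline is a faithful reproduction of the paper's argument.
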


\begin{proof}
Define event $A_{n,i}:=\{F_i>\delta_n\}$, then we split the expectation into two parts:
\begin{align*}\sup_t\Big|\sum_i (f_i(w,t)-\EE f_i(w,t))\Big|
&\le\sup_t\Big|\sum_i (f_i(w,t)\mathbf{1}\{A_{n,i}\}-\EE f_i(w,t)\mathbf{1}\{A_{n,i}\})\Big|\\
&\,\,+\sup_t\Big|\sum_i (f_i(w,t)\mathbf{1}\{A^c_{n,i}\}-\EE f_i(w,t)\mathbf{1}\{A^c_{n,i}\})\Big|
\end{align*}
For the first item in the right side, by the condition 1, we have
\begin{equation}\label{bound1}
\begin{split}
\PP&\left\{\sup_t\Big|\sum_i (f_i(w,t)\mathbf{1}\{A_{n,i}\}-\EE f_i(w,t)\mathbf{1}\{A_{n,i}\})\Big|>\epsilon/2\right\}\\
&\quad\quad\quad\le 2\epsilon^{-1}\EE\left\{\sup_t\Big|\sum_i (f_i(w,t)\mathbf{1}\{A_{n,i}\}-\EE f_i(w,t)\mathbf{1}\{A_{n,i}\})\Big|\right\}\\
&\quad\quad\quad\le 4\epsilon^{-1}\EE\left\{\sum_i \sup_t f_i(w,t)\mathbf{1}\{A_{n,i}\}\right\}
=4\epsilon^{-1} b_n
\end{split}
\end{equation}
For the second item, denote $f^\ast_i=f_i \mathbf{1}\{A^c_{n,i}\}$. Using symmetrization, we have
\[\PP\left\{\sup_t\Big|\sum_i (f^\ast_i(w,t)-\EE f^\ast_i(w,t))\Big|>\epsilon/2\right\}
\le4\EE_{\sigma}\PP\left\{\sup_t\Big|\sum_i \sigma_if^\ast_i(w,t)\Big|>\epsilon/8\right\},\]
where $\sigma_i=1$ or $-1$ with probability $1/2$ independently. By the definition of covering number $\mathcal{N}(\epsilon/16,\mathcal{F}_{nw},\rho_n)$,
given $w$, for each $t$ in $\mathcal{F}_{nw}$, there exists $t'$ such that the distance $\rho_n(t,t')\le \epsilon/16$. Then we have
\begin{align}\PP\left\{\sup_t\Big|\sum_i \sigma_if^\ast_i(w,t)\Big|>\epsilon/8\right\}
&\le\PP\left\{\max_{t'}\Big|\sum_i \sigma_if^\ast_i(w,t')\Big|+\rho_n(t,t')>\epsilon/8\right\}\nonumber\\
&\le \PP\left\{\max_{t'}\Big[\sum_i \sigma_if^\ast_i(w,t')\Big]>\epsilon/16\right\}\nonumber\\
&\le \mathcal{N}(\epsilon/16,\mathcal{F}_{nw},\rho_n)\max_{t'}\PP\left\{\Big|\sum_i \sigma_if^\ast_i(w,t')\Big|>\epsilon/16\right\}
\label{bound2}
\end{align}
By the Hoeffding's inequality and $f^\ast_i(w,t')\le\delta_n$, we further have
\begin{equation}\label{bound3}
\PP\left\{\Big|\sum_i \sigma_if^\ast_i(w,t')\Big|>\epsilon/16\right\}
\le2\exp\left(-\frac{2(\epsilon/16)^2}{\sum_i(2f^\ast_i(w,t'))^2}\right)
\le2\exp\left(-\frac{\epsilon^2}{512n\delta_n^2}\right)
\end{equation}
Therefore, combining inequalities (\ref{bound1}), (\ref{bound2}) and (\ref{bound3}), we have
\begin{equation}\label{eq:upbound}\PP\left\{\sup_t\Big|\sum_i (f_i(w,t)-\EE f_i(w,t))\Big|>\epsilon\right\}\le 4\epsilon^{-1}b_n+ 8c(\epsilon/16)\epsilon^{-1}\exp\left(-\frac{\epsilon^2}{512n\delta_n^2}\right).\end{equation}
By choosing $n\delta_n^2\to 0$, we have the right side tend to 0.
\end{proof}

\end{document}